\theoremstyle{plain}
 \newtheorem{theo}{Theorem}[section]
 \newtheorem{prop}[theo]{Proposition}
 \newtheorem{lemm}[theo]{Lemma}
 \newtheorem{coro}[theo]{Corollary}
\theoremstyle{definition}
 \newtheorem*{defn}{Definition}
 \newtheorem{rem}{Remark}[section]
\theoremstyle{remark}
 \newtheorem*{note}{Note}
\newcommand{\field}[1]{\mathbb{#1}}
\newcommand{\C}{\field{C}}
\newcommand{\Q}{\field{Q}}
\newcommand{\R}{\field{R}}
\newcommand{\Z}{\field{Z}}
 \DeclareMathOperator{\Hom}{Hom}
\DeclareMathOperator{\DHF}{DH} 
 \DeclareMathOperator{\vol}{vol}
\DeclareMathOperator{\ch}{ch}
\DeclareMathOperator{\Rat}{Rat}
\DeclareMathOperator{\Td}{Td}
\DeclareMathOperator{\Lk}{Lk}
\newcommand{\sigm}{\Sigma}
\newcommand{\sigmn}{\Sigma^{(n)}}
\newcommand{\sigmk}{\Sigma^{(k)}}
\newcommand{\sigmone}{\Sigma^{(1)}}
\newcommand{\del}{\Delta}
\newcommand{\V}{\mathcal{V}}
\newcommand{\T}{\mathcal{T}}
\newcommand{\PO}{\mathcal{P}}
\newcommand{\POxi}{\mathcal{P}(\xi)}
\newcommand{\POxiJ}{\mathcal{P}(\xi)_J}
\newcommand{\barx}{\bar{x}}
\newcommand{\tilG}{\tilde{G}}
\newcommand{\tilT}{\tilde{T}}
\newcommand{\ujJ}{u_j^J}
\newcommand{\uiI}{u_i^I}
\def\l{\langle}
\def\r{\rangle}
\newcommand{\Proj}{\field{P}}
\newcommand{\img}{\sqrt{-1}}
\title{On a Morelli type expression of cohomology classes 
of torus orbifolds}
\author{Akio Hattori}
\address{Graduate School of Mathematical Science, University of Tokyo,
Tokyo, Japan}
\email{hattori@ms.u-tokyo.ac.jp}
\date{}
\begin{document}
\subjclass[2000]{14M25, 52B29, 57R91}

\begin{abstract}
Let $X$ be a complete toric variety of dimension 
$n$ and $\del$ the fan in a lattice $N$ associated to 
$X$. For each cone $\sigma$ of 
$\del$ there corresponds an orbit closure $V(\sigma)$ of the 
action of complex torus on $X$. The homology classes 
$\{[V(\sigma)]\mid \dim \sigma=k\}$ form a set of specified 
generators of $H_{n-k}(X,\Q)$. Then any $x\in H_{n-k}(X,\Q)$ can be 
written in the form 
\[ x=\sum_{\sigma\in\del_X, \dim\sigma=k}\mu(x,\sigma)[V(\sigma)]. \]
A question occurs whether there is some canonical way to express 
$\mu(x,\sigma)$. Morelli \cite{Mor} gave an answer when $X$ is 
non-singular and at least for 
$x= \T_{n-k}(X)$ the Todd class of $X$. However his answer takes 
coefficients in the field of rational functions 
of degree $0$ on the Grassmann manifold $G_{n-k+1}(N_\Q)$ of 
$(n-k+1)$-planes in $N_\Q$. His proof uses Baum-Bott's residue 
formula for holomorphic foliations applied to the action of complex 
torus on $X$

On the other hand there appeared several attempts for generalizing 
non-singular toric varieties in topological contexts \cite{DJ, Mas, 
HM1, MP, IFM, CGP}. 
Such generalized manifolds of dimension $2n$ acted on by a compact 
$n$ dimensional torus $T$ are called by the names quasi-toric manifolds, 
torus manifolds, toric manifolds, toric origami manifolds, 
topological toric manifolds and so on. 
Similarly torus orbifold can be considered. 
To a torus orbifold $X$ a simplicial set $\del_X$ called multi-fan of $X$ 
is associated. 
A question occurs whether a similar expression to Morelli's formula holds 
for torus orbifolds.
It will be shown the answer is yes in this case too at least when 
the rational cohomology ring $H^*(X)_\Q$ is generated by $H^2(X)_\Q$. 
Under this assumption the equivariant cohomology ring with rational 
coefficients $H^*_T(X,\Q)$ is isomorphic to $H^*_T(\del_X,\Q)$, the face 
ring of the multi-fan $\del_X$, and the 
proof is carried out on $H^*_T(\del_X,\Q)$ by using completely combinatorial 
terms. 
\end{abstract}

\maketitle

\section{Introduction}

Let $X$ be a complete toric variety of dimension $n$ and $\del_X$ the fan 
associated to $X$. $\del_X$ is a collection of rational convex cones 
in $N_\R=N\otimes \R$ where $N$ is a lattice of rank $n$. 
For each $k$-dimensional cone $\sigma$ in $\del_X$, let 
$V(\sigma)$ be the corresponding orbit closure of dimension 
$n-k$ and $[V(\sigma)]\in A_{n-k}(X)$ be its Chow class. Then the 
Todd class $\T_{n-k}(X)$ of $X$ can be written in the form
\begin{equation}\label{eq:Todd} 
\T_{n-k}(X)=\sum_{\sigma\in\del_X, \dim\sigma=k}\mu_k(\sigma)[V(\sigma)]. 
\end{equation}
However, since the $[V(\sigma)]$ are not linealy independent, the 
coefficients $\mu_k(\sigma)\in\Q$ are not determined uniquely. 
Danilov \cite{Dan} asks if $\mu_k(\sigma)$ can be chosen so that it 
depends only on the cone $\sigma$ not depending on a particular fan 
in which it lies. 

The equality \eqref{eq:Todd} 
has a close connection with the number $\#(P)$ of lattice points  
contained in a convex lattice polytope $P$ in $M_\R$ where $M$ is the 
dual lattice of $N$. For a positive 
integer $\nu$ the number $\#(\nu P)$ is expanded as a polynomial 
in $\nu$ (called Ehrhart polynomial): 
\[ 
\#(\nu P)=\sum_ka_k(P)\nu^{n-k}.
\]

A convex lattice polytope $P$ in $M_\R$ determines a complete toric 
variety $X$ 
and an invariant Cartier divisor $D$ on $X$. There is a one-to-one 
correspondence between the cells $\{\sigma\}$ of $\del_X$ and the 
faces $\{P_\sigma\}$ of $P$. Then the coefficient $a_k(P)$ 
has an expression
\begin{equation}\label{eq:Ehr}
 a_k(P)=\sum_{\dim \sigma=k}\mu_k(\sigma)\vol P_\sigma 
\end{equation}
with the same $\mu_k(\sigma)$ as in \eqref{eq:Todd}. 

Hereafter we shall use notation $H_T^*(\ )_\Q$ 
to mean $H_T^*(\ )\otimes\Q$ and so on. 

We shall restrict ourselves to the case where $X$ is non-singular. 
Put $D_i=[V(\sigma_i)]$ for the one dimensional cone $\sigma_i$, and 
let $x_i\in H^2(X)$ denotes the Poincar\'{e} dual of $D_i$. The divisor 
$D$ is written in the form $D=\sum_id_iD_i$ 
with positive integers $d_i$. Put $\xi=\sum_id_ix_i$. 
It is known that
\begin{equation*}
 a_k(P)=\int_Xe^\xi \T^k(X) \ \ \text{and}\ 
 \vol P_\sigma=\int_Xe^\xi x_\sigma,
\end{equation*}
where $\T^k(X)\in H^{2k}(X)_\Q$ is the $k$-th component of 
the Todd cohomology class, the Poincar\'{e} dual of $\T_{n-k}(X)$, 
and $x_\sigma\in H^{2k}(X)$ is the Poincar\'{e} dual of $[V(\sigma)]$. 
The cohomology class $x_\sigma$ can also be written as 
$x_\sigma=\prod_jx_j$ where the product runs over such $j$ that $\sigma_j$ 
is an edge of $\sigma$. Then the equality \eqref{eq:Ehr} can be rewritten 
as
\begin{equation}\label{eq:Ehr2}
\int_Xe^\xi \T^k(X)=\sum_{\dim\sigma=k}\mu_k(\sigma)\int_Xe^\xi x_\sigma.
\end{equation}
The reader is referred to \cite{Ful} Section 5.3 for details and Note 17 
there for references. 

In his paper \cite{Mor} Morelli gave an answer to Danilov's question. Let 
$\Rat(G_{n-k+1}(N_\Q)))_0$ denote the field of rational functions of 
degree $0$ on the Grassmann manifold of $(n-k+1)$-planes in 
$N_\Q$. For a cone $\sigma$ of dimension $k$ in $N_\R$ 
he associates a rational function $\mu_k(\sigma)\in
\Rat(G_{n-k+1}(N_\Q)))_0$. With this $\mu_k(\sigma)$, the right 
hand side of \eqref{eq:Todd} belongs to 
\[
\Rat(G_{n-k+1}(N_\Q)))_0\otimes_\Q A_{n-k}(X)_\Q,
\]
and the equality \eqref{eq:Todd} means that the rational function 
with values in $A_{n-k}(X)_\Q$ in the right hand side is in fact 
a constant function equal to $\T_{n-k}(X)$ in $A_{n-k}(X)_\Q$. 
In other words this means that 
\[
\sum_{\sigma\in\del_X, \dim\sigma=k}\mu_k(\sigma)(E)[V(\sigma)]
 =\T_{n-k}(X)
\]
for any generic $(n-k+1)$-plane $E$ in $N_\Q$. 

Morelli gives an explicit formula for $\mu_k(\sigma)$ when the toric 
variety is non-singular using Baum-Bott's residue formula for 
singular foliations \cite{BB} applied to the action of $(\C^*)^n$ on $X$. 
He then shows that the function 
$\mu_k(\sigma)$ is additive with respect to non-singular subdivisions 
of the cone $\sigma$. This fact leads to \eqref{eq:Todd} in 
its general form.  

One can ask a similar question about general classes other than the 
Todd class whether it is possible to define 
$\mu(x,\sigma)\in\Rat(G_{n-k+1}(N_\Q)))_0$ for 
$x\in A_{n-k}(X)$ in a canonical way to satisfy 
\begin{equation}\label{eq:Todd'}
x=\sum_{\sigma\in\del_X, \dim\sigma=k}\mu(x,\sigma)[V(\sigma)]. 
\end{equation}
When $X$ is non-singular one can expect that $\mu(x,\sigma)$ satisfies 
a formula analogous to \eqref{eq:Ehr2}
\begin{equation}\label{eq:Ehr'} 
\int_X e^\xi x=\sum_{\dim\sigma=k}\mu(x,\sigma)\int_Xe^\xi x_\sigma
\end{equation}
for any cohomology class $\xi=\sum_id_ix_i$. In this sense the 
formula does not explicitly refer to convex polytopes.  
Fulton and Sturmfels \cite{FS} used Minkowski weights to describe 
intersection theory of toric varieties. For complete 
non-singular varieties or $\Q$-factorial varieties $X$ the Minkowski 
weight $\gamma_x:H^{2(n-k)}(X)\to \Q$ corresponding to $x\in H^{2k}(X)$ 
is defined by $\gamma_x(y)=\int_X xy$. Thus, if the $d_i$ are considered 
as variables in $\xi$, the formula 
\eqref{eq:Ehr'} is considered as describing $\gamma_x$ as 
a linear combination of the Minkowski weights of $\gamma_{x_\sigma}$. 

On the other hand topological analogues of toric variety were discussed by several 
authors \cite{DJ, Mas, HM1, MP, IFM, CGP}. Most general one would be torus 
orbifold \cite{HM1}. To a torus orbifold $X$ a multi-fan $\del_X$ is associated. 
Multi-fan is a generalized notion of fan. Its cohomology reflects the cohomology of the torus orbifold.  

The purpose of the present paper is to establish the 
formula \eqref{eq:Ehr'} by showing an explicit formula for $\mu(x,\sigma)$ 
when $X$ is a torus orbifold. Moreover our proof is 
based on a simple combinatorial argument carried on the associated multi-fan $\del_X$. 
Topologically the formula 
concerns equivariant cohomology classes on torus orbifolds. 
This would suggest that actions of compact tori 
equipped with some nice conditions admit topological residue formulas 
similar to Baum-Bott' formula. 

In Section 2 we recall the definition of multi-fans and torus 
orbifolds together with relevant facts. 
The definition of $\mu(x,\sigma)$ is given for multi-fans and 
consequently for torus orbifolds. Theorem \ref{theo:multimain} states that 
the formula \eqref{eq:Todd} holds for any 
torus orbifolds. Furthermore Corollary \ref{coro:multiak} 
ensures that the formula \eqref{eq:Ehr} holds for torus orbifolds.
Finally Corollary \ref{coro:multimain} states that \eqref{eq:Todd'} 
holds for a torus orbifolds $X$ such that $H^*(X)_\Q$ is 
generated by $H^2(X)_\Q$.

\section{Multi-fans and torus orbifolds}
The notion of multi-fan and multi-polytope were introduced in 
\cite{Mas}. In this article 
we shall be concerned only with simplicial multi-fans. See 
\cite{Mas, HM1, HM2} for details. 

Let $N$ be a lattice of rank $n$. A \emph{simplicial multi-fan} in $N$ 
is a triple $\Delta=(\Sigma,C,w)$ where $\sigm=\bigsqcup_{k=0}^n\sigmk$ 
is an (augmented) simplicial 
complex, $\sigmk$ being the set of $k-1$ simplices, $C$ is a map 
from $\sigmk$ into the set of $k$-dimensional 
strongly convex rational
polyhedral cones in the vector space $N_\R=N\otimes \R$ for 
each $k$, and  $w$ is a map $\sigmn \to \Z$. $\sigm^{(0)}$ consists 
of a single element $o=\text{the empty set}$. (The definition in 
\cite{Mas} and \cite{HM1} requires additional restriction on $w$.) 
We assume that any $J\in \Sigma$ 
is contained in some $I\in \sigmn$ and $\sigmn$ is not empty. 

The map $C$ is required to satisfy the following condition; if 
$J\in\Sigma$ 
is a face of $I\in\Sigma$, then $C(J)$ is a face of $C(I)$, and   
for any $I$, the map $C$ restricted on 
$\sigm(I)=\{J\in \Sigma\mid J\subset I\}$ is an isomorphism of 
ordered sets onto the set of faces of $C(I)$. It follows that 
$C(I)$ is necessarily a simplicial cone and $C(o)=0$.
A simplicial fan is considered as a simplicial multi-fan such that 
the map $C$ on $\sigm$ is injective and $w\equiv 1$. 

For each $K\in \Sigma$ we set
\[ \Sigma_K=\{ J\in\Sigma\mid K\subset J\}.\]
It inherits the partial ordering from $\Sigma$ and becomes a 
simplicial set where $\sigm_K^{(j)}\subset\sigm^{(j+|K|)}$. $K$ is 
the unique element in $\Sigma_K^{(0)}$. 
Let $N_K$ be the minimal primitive sublattice of $N$ containing 
$N\cap C(K)$, and 
$N^K$ the quotient lattice of $N$ by $N_K$. 
For $J\in \Sigma_K$ we define $C_K(J)$ to be
the cone $C(J)$ projected on $N^K\otimes\R$. 
We define a function 
\[ w:\Sigma_K^{(n-|K|)}\subset \Sigma^{(n)} \to \Z \]
to be the restrictions of $w$ to $\Sigma_K^{(n-|K|)}$. The triple 
$\Delta_K=(\Sigma_K,C_K,w)$ is a multi-fan in $N^K$ 
and is called the  
\emph{projected multi-fan} with respect to $K\in \Sigma$. 
For $K=o$, the projected multi-fan $\Delta_o$ 
is nothing but $\Delta$ itself. 

A vector $v\in N_\R$ will be called \emph{generic} if $v$ does
not lie on any linear subspace spanned by a cone in 
$C(\Sigma)$ of dimension less than $n$. For a generic
vector $v$ we set $d_v=\sum_{v\in C(I)}w(I)$, where
the sum is understood to be zero if there are no such $I$. 

\begin{defn}
A simplicial multi-fan $\Delta=(\Sigma,C,w)$ is called 
\emph{pre-complete} if the integer $d_v$ is independent of 
generic vectors $v$. In this case this integer will be called 
the {\it degree} of $\Delta$ and will be denoted by $\deg(\Delta)$.
It is also called the \emph{Todd genus} of $\del$ and is denoted 
by $\Td[\del]$. A pre-complete 
multi-fan $\Delta$ is said to be \emph{complete} if the 
projected multi-fan $\Delta_K$ is pre-complete for every $K\in \Sigma$. 
\end{defn}
A multi-fan is complete if and only if the projected multi-fan 
$\Delta_J$ is pre-complete for every $J\in \Sigma^{(n-1)}$.

Like a toric variety gives rise to a fan, a torus orbifold gives rise 
to a complete simplicial multi-fan, though this correspondence is not 
one to one. A torus orbifold is a \underline{closed oriented} 
orbifold with an effective action (in the sense of orbifold action) 
of a compact torus of half the dimension of 
the orbifold with non-empty fixed point set and with some additional 
conditions on the orientations of certain type of suborbifolds 
(precise statement will be given later. 
See \cite{Sat} for terminologies 
concerning orbifolds, and \cite{HM1} for those of torus orbifolds). 
Cobordism invariants of torus orbifolds are encoded in the 
associated multi-fans.

Let $X$ be a torus orbifold. A connected component of the fix point set of 
a subcircle of the torus $T$ is a suborbifold. A suborbifold of this type 
which has 
codimension two and contains at least one fixed point of the action of 
$T$ is called characteristic suborbifold. By the orientation convention 
included in the definition of torus orbifold, a characteristic suborbifold 
is equipped with a fixed orientation.

In the following, characteristic suborbifolds will be denoted by$X_i$. 
In the multi-fan $\del(X)=(\sigm(X),C(X),w(X))$ the simplicial
complex $\sigm(X)$ is given by 
\[ 
\Sigma^{(k)}(X)=\{ I\mid \#I=k+1,(\bigcap_{i\in I}X_i)^T\not=\emptyset\}.
\]

Let $S_i$ be the circle that fixes the points of $X_i$. Take a point 
$x$ in $X_i$. Take an orbifold chart $(U_x,V_x,H_x,p_x)$ around $x$ in which 
$U_x$ is invariant under the action of $S_i$ and $V_x$ is an Euclidean  
ball on which $H_x$ acts linearly and the projection $p_x:V_x\to U_x$ 
identifies $V_x/H_x$ with $U_x$. Then there exist  
a covering group $\tilde{S}_i$ of $S_i$ and a lifting of the action of $S_i$ 
to the action of $\tilde{S}_i$ on $V_x$ (exactly its tangent space). 
Hereafter we shall always take the minimal covering with the above property.

If $x$ is a fixed point of the action of $T$, $U_x$ can be taken 
invariant under the action of $T$ and such that $p_x^{-1}(x)$ is a single point. 
Furthermore if $x$ is in a characteristic 
suborbifold $X_i$, then the vector space $V_x$ decomposes into a direct
sum $V_x=V_i+V_i^\bot$ where $V_i^\bot$ is tangent to 
$p_x^{-1}(U_x\bigcap X_i)$ and $V_i$ is normal to the tangent space of 
$p_x^{-1}(U_x\bigcap X_i)$ at $p_x^{-1}(x)$ and is endowed with an invariant 
complex $1$-dimensional vector space structure as follows from the 
definition of torus orbifolds. Then there is a unique isomorphism 
$\varphi_i:S^1\to \tilde{S}_i$ such that $\varphi_i(z)$ acts by the 
complex multiplication of $z\in S^1\subset\C$ on $V_i$.
$\varphi_i$ 
depends only on $X_i$, not on particular choice of $x$. 
Let $\pi:\tilde{S}_i\to S_i$ denote the covering projection. The homomorphism  
$\rho_i=\pi\circ\varphi_i: S^1\to S_i\subset T$ defines an element 
$v_i\in \Hom(S^1,T)=H_2(BT,\Z)$. Then $C(X)(I)$ is the cone in $N=H_2(BT,\Z)$ 
with apex at $0$ and spanned by $\{v_i\mid i\in I\}$.

Let $\Delta=(\Sigma,C,w)$ be a simplicial multi-fan in a lattice $N$. 
The Stanley-Reisner ring or the face ring of the simplicial set 
$\sigm$ is denoted by 
$H_T^*(\del)$. It is the quotient ring of the polynomial ring 
$\Z[x_i\mid i\in\sigmone]$ by the ideal generated by 
\[\{x_K=\prod_{i\in K}x_i\mid K\subset \sigmone,\ K\notin\sigm\}\].

When $\del$ is the fan $\del_X$ associated to a torus orbifold $X$, 
$H_T^*(\del_X)_\Q$ can be 
identified with a subring of 
the equivariant cohomology ring $H_T^*(X)_\Q$ of $X$ with 
respect to the 
action of compact torus $T$ acting on $X$ (see \cite{Mas}). 
(Hereafter we shall use notation $H_T^*(\ )_\Q$ 
to mean $H_T^*(\ )\otimes\Q$.) 

In the sequel we shall often consider a set $\V$ consisting 
of non-zero edge vectors $v_i$ for each $i\in \sigmone$ such that 
$v_i\in N\cap C(i)$. We do not require $v_i$ to be primitive. 
This has meaning for torus orbifolds. 
For any $K\in \Sigma$ put $\V_K=\{v_i\}_{i\in K}$. Let $N_{K,\V}$ 
be the sublattice of $N_K$ generated by $\V_K$. The quotient group 
$N_K/N_{K,\V}$ is denoted by $H_{K,\V}$ . 

Let $\V=\{v_i\}_{i\in \sigmone}$ be a set of prescribed 
edge vectors as before. We define a homomorphism 
$M=N^*=H_T^2(pt) \to H_T^2(\Delta)$ by the formula
\begin{equation}\label{eq:module}
 u=\sum_{i\in\sigmone}\l u,v_i\r x_i, 
\end{equation}
This extends to a homomorphism $H_T^*(pt)\to H_T^*(\Delta)$ and 
makes $H_T^*(\Delta)$  a ring over $H_T^*(pt)$ (regarded as embedded in 
$H_T^*(\Delta)$). 

Since this definition depends on the set $\V$, the $H_T^*(pt)$-module 
structure of $H_T^*(\del)$ also depends on $\V$. To emphasize this fact 
we shall use the notation $H_T^*(\del,\V)$. When all the $v_i$ are taken 
primitive, the notation $H_T^*(\del)$ is used.

Fix $I\in\sigmn$ and let $\{u_i^I\}_{i\in I}$ be the basis 
of $N^*=H^2(pt)$ dual to $\{v_i\}_{i\in I}$.  
Define $\iota_I^*: H_T^2(\del)_\Q\to M_\Q=H_T^2(pt)_\Q$ by
\begin{equation}\label{eq:iota}
\iota_I^*\left(\sum_{i\in\sigmone}d_ix_i\right)=\sum_{i\in I}d_i\uiI. 
\end{equation}
$\iota_I^*$ extends to $H_T^*(\del)_\Q\to H_T^*(pt)_\Q$. 
It is an $H_T^*(pt)_\Q$-module map, since 
\[ \iota_I^*(u)=u \ \text{for $u\in H_T^*(pt)_\Q$}. \]
Let $S$ be the multiplicative set in $H_T^*(pt)_\Q$ generated by 
non-zero elements in $H_T^2(pt)_\Q$. The push-forward 
$\pi_*:H_T^*(\Delta)_\Q\to S^{-1}H_T^*(pt)_\Q$ is defined by 
\begin{equation}\label{eq:pi}
 \pi_*(x)=\sum_{I\in\sigmn}\frac{\iota_I^*(x)}
  {|H_I|\prod_{i\in I}u_i^I}. 
\end{equation}
It is an $H_T^*(pt)_\Q$-module map, and 
lowers the degrees by $2n$. 
It is known \cite{HM1} that, if $\Delta$ is a complete simplicial 
multi-fan, then the image of $\pi_*$ lies in $H_T^*(pt)_\Q$. 

Assume that $\Delta$ is complete. Let $p_*:H_T^*(\Delta)_\Q\to \Q$ 
be the composition of 
$\pi_*:H_T^*(\Delta)_\Q\to H_T^*(pt)_\Q$ and 
$H_T^*(pt)_\Q\to H_T^0(pt)_\Q=\Q$. Note that 
$p_*$ induces $\int_\Delta:H^*(\Delta)_\Q\to \Q$ as noted in 
\cite{HM1} where $H^*(\Delta)_\Q$ is the quotient of $H_T^*(\Delta)_\Q$ 
by the ideal generated by $H_T^+(pt)_\Q$. Note that $H^*(\Delta)_\Q$ 
is defined independently of $\V$. If $\bar{x}$ denotes the 
image of $x\in H_T^*(\Delta)_\Q$ in $H^*(\Delta)_\Q$, then 
$\int_\Delta \bar{x}=p_*(x)$. 

If $X$  a torus orbifold such that $\del_X=\del$ 
then $H_T^*(\del)_\Q$ is a subring of $H_T^*(X)_\Q$. From this it follows that 
$p_*$ on $H_T^*(\Delta)_\Q$ is the restriction of $p_*:H_T^*(X)_\Q\to \Q$ 
and $\int_\del$ is the ordinary integral $\int_X$ (see \cite{HM1}). 

Let $K\in \sigmk$ and let $\Delta_K=(\Sigma_K,C_K,w_K)$ be 
the projected multi-fan. The link $\Lk\ K$ of $K$ in $\Sigma$ 
is a simplicial complex consisting of simplices $J$ such that 
$K\cup J\in \Sigma$ and $K\cap J=\emptyset$. 
It will be denoted by $\Sigma'_K$ in the sequel. There is an isomorphism 
from $\Sigma'_K$ to $\Sigma_K$ sending $J\in {\Sigma'}_K$ to 
$J\cup K$.
Let $\V=\{v_i\}_{i\in \sigmone}$ be a set of prescribed 
edge vectors as before. Let $\{u_i^K\}_{i\in K}$ be the basis of 
$N_{K,\V}^*$ dual to $\V_K$. We consider the polynomial 
ring $R_K$ 
generated by $\{x_i\mid i\in K\cup{\Sigma'}_K^{(1)}\}$ 
and the ideal $\mathcal{I}_K$ generated by monomials
$x_J=\prod_{i\in J} x_i$ such 
that $J\notin \sigm(K)\ast\Sigma'_K$ where $\sigm(K)\ast\Sigma'_K$ 
is the join of $\sigm(K)$ and $\Sigma'_K$. We define the equivariant 
cohomology $H_T^*(\Delta_K)$ of
$\Delta_K$ with respect to the torus $T$ as the quotient ring
$R_K/\mathcal{I}_K$. 

If $\V$ is a set of  prescribed edge vectors, $H_T^2(pt)$ is regarded 
as a submodule of $H_T^2(\Delta_K)$ by a formula similar to 
\eqref{eq:module}. This defines
an $H_T^*(pt)$-module structure on $H_T^*(\Delta_K)$ which 
will be denoted by $H_T^*(\Delta_K,\V)$ to specify the dependence 
on $\V$. The projection $H_T^*(\Delta,\V)\to H_T^*(\Delta_K,\V)$ 
is defined by sending $x_i$ to $x_i$ for $i\in K\cup{\Sigma'}_K^{(1)}$ and
putting $x_i=0$ for $i\notin K\cup{\Sigma'}_K^{(1)}$.  
The restriction homomorphism
$\iota_I^*:H_T^*(\Delta_K,\V)_\Q \to H_T^*(pt)_\Q$ for 
$I\in \Sigma_K^{(n-k)}$ and the push-forward 
$\pi_*:H_T^*(\Delta_K,\V)_\Q \to S^{-1}H_T^*(pt)_\Q$ are
also defined in a similar way as before.  

Given   
$\xi=\sum_{i\in K\cup{\Sigma'}_K^{(1)}}d_ix_i\in H_T^2(\Delta_K,\V)_\R,
\ d_i\in \R$, let $A_K^*$ be 
the affine subspace in the space $M_\R$ defined by 
$\langle u,v_i\rangle =d_i$ for $i\in K$. Then we introduce 
a collection $\mathcal{F}_K=\{F_i\mid i\in{\Sigma'}_K^{(1)}\}$
of affine hyperplanes in $A_K^*$ by setting 
\[ F_i= \{u\mid u\in A_K^*,\ \langle u,v_i \rangle =d_i \}. \]
The pair $\PO_K(\xi)=(\Delta_K,\mathcal{F}_K)$ will be called a 
\emph{multi-polytope} associated with $\xi$; see \cite{HM2}. 
In case $K=o\in \sigm^{(0)}$, $\PO_K(\xi)$ is simply denoted by $\POxi$. 

For $\xi=\sum_{i\in\sigmone}d_ix_i$ and $K\in\sigmk$ put 
$\xi_K=\sum_{i\in K\cup{\Sigma'}_K^{(1)}}d_ix_i$ and 
$\POxi_K=\PO_K(\xi_K)$. It will be called the \emph{face} of $\POxi$ 
corresponding to $K$. 

For $I\in \Sigma_K^{(n-k)}$, i.e. 
$I \in \sigmn$ with $I\supset K$, we put 
$u_I=\cap_{i\in I}F_i=\cap_{i\in I\setminus K}F_i\cap A_K^*\in A_K^*$. 
Note that $u_I$ is equal to $\iota_I^*(\xi)$. The dual vector space 
$(N^K_\R)^*$ of $N^K_\R$ is canonically identified with 
the subspace $M_{K\R}$ of 
$M_\R=H_T^2(pt)_\R$. It is parallel to $A_K^*$, and $u_i^I$ lies 
in $M_{K\R}$ for $I\in \Sigma_K^{(n-k)}$ and $i\in I\setminus K$.
A vector $v\in N^K_\R$ is called generic if
$\langle u_i^I,v\rangle \not=0$ for any $I\in \Sigma_K^{(n-k)}$ and
$i\in I\setminus K$. 
The image in $N_\R^K$ of a generic vector in $N_\R$ is generic. 
Take a generic vector $v\in N^K_\R$, and define
\[ (-1)^I:=(-1)^{\#\{j\in I\setminus K\mid \langle u_j^I,v\rangle >0\}}
\quad \text{and}\quad
(u_i^I)^+ :=
      \begin{cases}
      u_i^I & \text{if}\  \langle u_i^I,v\rangle >0\\
     -u_i^I & \text{if}\  \langle u_i^I,v\rangle <0. 
      \end{cases}            \]
for $I\in \Sigma_K^{(n-k)}$ and $i\in I\setminus K$. 
We denote by $C_K^*(I)^+$ the cone in $A_K^*$ spanned by the
$(u_i^I)^+,\ i\in I\setminus K,$ with apex at $u_I$, and by $\phi_I$ its
characteristic function. With these understood, we define a
function $\DHF_{\PO_K(\xi)}$ on $A_K^*\setminus \cup_iF_i$ by
\[ \DHF_{\PO_K(\xi)}=\sum_{I\in \Sigma_K^{(n-k)}}(-1)^Iw(I)\phi_I. \]
As in \cite{HM2} we call this function the \emph{Duistermaat-Heckman 
function} associated with $\PO_K(\xi)$. When $K=o$, $\DHF_{\POxi}$ is 
defined on $M_\R\setminus \cup_iF_i$.

The following theorem is fundamental in the sequel, cf. \cite{HM2} 
Theorem 2.3 and \cite{HM1} Corollary 7.4. 
\begin{theo}\label{theo:DH}
Let $\Delta$ be a complete simplicial multi-fan.
Let $\xi=\sum_{i\in K\cup{\Sigma'_K}^{(1)}}d_ix_i\in H_T^2(\Delta_K,\V)$ 
be as above with all $d_i$ integers and put 
$\xi_+=\sum_i(d_i+\epsilon)x_i$ with $0<\epsilon<1$. Then
\begin{equation}\label{eq:DH}
\sum_{u\in A_K^*\cap M}\DHF_{\PO_{K}(\xi_+)}(u)t^u
 =\sum_{I\in \Sigma_K^{(n-k)}}\frac{w(I)}{|H_{I,\V}|}\sum_{h\in H_{I,\V}}
 \frac{\chi_I(\iota_I^*(\xi),h)t^{\iota_I^*(\xi)}}
 {\prod_{i\in I\setminus K}(1-\chi_I(u_i^I,h)^{-1}t^{-u_i^I})},
\end{equation}
where $\chi_I(u,h)=e^{2\pi\sqrt{-1}\langle u,v(h)\rangle}$ for 
$u\in N_{I,\V}^*$ and $v(h)$ is a lift of $h\in H_{I,\V}$ to $N_{I,\V}$.
\end{theo}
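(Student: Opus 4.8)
\emph{Proof proposal.} The plan is to expand the left-hand side directly from the definition of the Duistermaat--Heckman function and to evaluate the resulting one-cone generating functions separately. Since $\del$ is complete, the projected multi-fan $\del_K$ is pre-complete, and hence $\DHF_{\PO_K(\xi_+)}$ is compactly supported on $A_K^*$ (the unbounded contributions of the cones $C_K^*(I)^+$ cancel in the signed sum); thus the left-hand side is a genuine Laurent polynomial in $t$, a \emph{finite} sum over $A_K^*\cap M$. Writing $\DHF_{\PO_K(\xi_+)}=\sum_I(-1)^Iw(I)\phi_I$ and interchanging the two summations, I would reduce the identity to computing, for each $I\in\sigm_K^{(n-k)}$, the lattice-point generating function $S_I:=\sum_{u\in A_K^*\cap M}\phi_I(u)\,t^u$ of the shifted simplicial cone $C_K^*(I)^+$, whose apex is $\iota_I^*(\xi_+)$ and whose edge directions are the $(u_i^I)^+$.

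The heart of the argument is the evaluation of $S_I$ as an orbifold lattice-point sum. The full set $\{u_i^I\mid i\in I\}$ is the $\Z$-basis of $N_{I,\V}^*$ dual to $\V_I$, and $M\subset N_{I,\V}^*$ with $N_{I,\V}^*/M\cong\widehat{H_{I,\V}}$; those indexed by $i\in I\setminus K$ span the subspace $M_{K\R}$ parallel to $A_K^*$, and generate in it the finer lattice $N_{I,\V}^*\cap M_{K\R}\supseteq (N^K)^*$. The key point is the character identity $\frac{1}{|H_{I,\V}|}\sum_{h\in H_{I,\V}}\chi_I(u,h)=1$ if $u\in M$ and $0$ otherwise, valid for $u$ in this finer lattice, which holds because $\chi_I(u,h)=e^{2\pi\img\l u,v(h)\r}$ is trivial exactly when $u$ pairs integrally with all of $N$. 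Inserting this indicator into the sum over $C_K^*(I)^+$ and swapping the order of summation, the inner sum factors as a product of geometric series—diagonal, hence summable, in the finer lattice because the $u_i^I$ form a basis of it—and collapses onto $M$, producing for each $I$ a term of shape $\frac{1}{|H_{I,\V}|}\sum_h\frac{\chi_I(\mathrm{apex},h)\,t^{\iota_I^*(\xi_+)}}{\prod_{i\in I\setminus K}(1-\chi_I((u_i^I)^+,h)\,t^{(u_i^I)^+})}$.

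It then remains to normalize each $S_I$ to the form on the right-hand side, where every cone opens in the $-u_i^I$ directions with apex $\iota_I^*(\xi)$. Here I would use, edge by edge, the elementary identity $\frac{1}{1-z}=-\frac{z^{-1}}{1-z^{-1}}$: flipping each edge $i$ with $\l u_i^I,v\r>0$ from the $+u_i^I$ to the $-u_i^I$ direction contributes a factor $-1$ and advances the apex by one lattice step. The number of such flips is precisely the exponent defining $(-1)^I$, so the signs carried by $\DHF$ are absorbed and the right-hand side emerges \emph{without} explicit signs; simultaneously the condition $0<\epsilon<1$ in $\xi_+=\sum_i(d_i+\epsilon)x_i$ ensures that no lattice point lies on any wall $F_i$, so the half-open cones tile $A_K^*$ consistently and the cumulative apex shift converts $t^{\iota_I^*(\xi_+)}$ into the clean exponent $t^{\iota_I^*(\xi)}$.

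The main obstacle I anticipate is carrying out the bookkeeping of the last two paragraphs \emph{simultaneously} and in the projected setting $K\neq o$: one must track the finite group $H_{I,\V}$ together with the sign/apex rewriting while keeping the ambient lattice $N^K$ and the offset coset $A_K^*\cap M$ straight, and one must justify these manipulations as identities of rational functions on the torus $\Hom(M,\C^*)$ (each $S_I$ converges only in its own chamber, but the poles cancel after summation, recovering the Laurent polynomial of the first paragraph). Once the one-cone formula is established for general $K$, this is the projected analogue of the $K=o$ statements of \cite{HM2} and \cite{HM1}, and summing over $I\in\sigm_K^{(n-k)}$ against the weights $w(I)$ finishes the proof.
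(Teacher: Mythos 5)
The paper does not prove this theorem at all: it is quoted as known, with a pointer to \cite{HM2} Theorem 2.3 and \cite{HM1} Corollary 7.4. Your proposal is a correct reconstruction of the argument given in those references --- the Lawrence--Varchenko-type decomposition of $\DHF_{\PO_K(\xi_+)}$ into signed cone characteristic functions, evaluation of each cone's lattice-point generating function over the finer lattice $N_{I,\V}^*\cap M_{K\R}$ with the character average $\frac{1}{|H_{I,\V}|}\sum_h\chi_I(\cdot,h)$ inserted to cut down to $M$, and the edge-by-edge identity $\frac{z}{1-z}=-\frac{1}{1-z^{-1}}$ absorbing the sign $(-1)^I$ and restoring the apex $\iota_I^*(\xi)$ --- and the bookkeeping you outline does close up exactly to the right-hand side of \eqref{eq:DH}. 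One small correction: compact support of the Duistermaat--Heckman function requires completeness of $\Delta_K$ (all of its projections pre-complete), not merely pre-completeness of $\Delta_K$ itself; this costs nothing here since $\Delta$ is assumed complete, but the justification as you state it is slightly too weak.
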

\begin{note}
The left hand side of \eqref{eq:DH} is considered 
as an element in the group ring of $M$ over $\R$  or the character ring 
$R(T)\otimes\R$ considered as the Laurent polynomial ring in 
$t=(t_1,\dots,t_n)$. The equality shows that the right hand side, which is 
a rational function of $t$, belongs to $R(T)\otimes\R$. 
\end{note}

$\xi=\sum_id_ix_i\in H_T^2(\del,\V)$ is called $T$-Cartier if 
$\iota_I^*(\xi)\in M$ for all $I\in\sigmn$. This condition is 
equivalent to $u_I\in M$ for all $I\in \sigmn$. 
In this case $\POxi$ is said lattice multi-polytope. 
If $\xi$ is $T$-Cartier, then $\chi_I(\iota_I^*(\xi),h)\equiv 1$. Hence 
the above formula \eqref{eq:DH} for $\DHF_{\PO_{K}(\xi_{K+})}$ reduces 
in this case to 
\begin{equation}\label{eq:DH2}
\sum_{u\in A_K^*\cap M}\DHF_{\PO_{K}(\xi_{K+})}(u)t^u
 =\sum_{I\in \Sigma_K^{(n-k)}}\frac{w(I)}{|H_{I,\V}|}\sum_{h\in H_{I,\V}}
 \frac{t^{\iota_I^*(\xi_K)}}
 {\prod_{i\in I\setminus K}(1-\chi_I(u_i^I,h)^{-1}t^{-u_i^I})}.
\end{equation}

Let $H_T^{**}(\ )$ denote the completed equivariant cohomology ring. 
The Chern character $\ch$ sends $R(T)\otimes\R$ to 
$H_T^{**}(pt)_\R$ by $\ch(t^u)=e^u$. 
The image of \eqref{eq:DH2} by $\ch$ is given by 
\begin{equation}\label{eq:DH3} 
\sum_{u\in A_K^*\cap M}\DHF_{\PO_{K}(\xi_{K+})}(u)e^u
 =\sum_{I\in \Sigma_K^{(n-k)}}\frac{w(I)}{|H_{I,\V}|}\sum_{h\in H_{I,\V}}
 \frac{e^{\iota_I^*(\xi_K)}}
 {\prod_{i\in I\setminus K}(1-\chi_I(u_i^I,h)^{-1}e^{-u_i^I})}. 
\end{equation}

Assume that $\xi=\sum_id_ix_i\in H_T^2(\del,\V)$ is $T$-Cartier. The number 
$\#(\PO(\xi)_K)$ is defined by 
\begin{equation*}\label{eq:sharp} 
 \#(\PO(\xi)_K)=\sum_{u\in A_K^*\cap M}\DHF_{\PO_{K}(\xi_{K+})}(u). 
\end{equation*}
It is obtained from \eqref{eq:DH3} by setting $u=0$, that is, it is 
equal to the image of \eqref{eq:DH3} by 
$H_T^{**}(pt)_\Q\to H_T^0(pt)_\Q$. 

The equivariant Todd class $\T_T(\del,\V)$ is defined in such a way
that 
\[ 
\pi_*(e^\xi\T_T(\del,\V))=\sum_{u\in M}\DHF_{\PO(\xi_+)}(u)e^u 
\]
for $\xi$ $T$-Cartier. In order to give the definition we need some 
notations.

For simplicity 
identify the set $\sigmone$ with $\{1,2,\ldots ,m\}$ and  
consider a homomorphism $\eta:\R^m=\R^{\sigmone} \to N_\R$ sending 
$\mathbf{a}=(a_1,a_2,\ldots,a_m)$ to $\sum_{i\in \sigmone}a_iv_i$. 
For $K\in\sigmk$ we define 
\[ 
\tilG_{K,\V}=\{\mathbf{a} \mid \eta(\mathbf{a})\in N\ \text{and}\ 
a_j=0\ \text{for $j\not\in K$}\}
\]
and define $G_{K,\V}$ to be the image of $\tilG_{K,\V}$ in 
$\tilT=\R^m/\Z^m$. 
It will be written $G_K$ for simplicity. 
The homomorphism $\eta$ restricted on $\tilG_{K,\V}$ induces 
an isomorphism 
\[\eta_K: G_K\cong H_{K,\V}\subset T=N_\R/N. \]

Put 
\[ 
G_{\Delta}=\bigcup_{I\in\sigmn}G_I\subset \tilT \quad \text{and}\quad 
  DG_{\Delta}=\bigcup_{I\in\sigmn}G_I\times G_I\subset 
   G_{\Delta}\times G_{\Delta}. 
\] 

Let $v(g)=\mathbf{a}=(a_1,a_2,\ldots,a_m)\in \R^m$ be 
a representative of $g\in \tilT$. The factor $a_i$ will be denoted by 
$v_i(g)$. It is determined modulo integers. If $g\in G_I$, then 
$v_i(g)$ is necessarily a rational number. Define a homomorphism 
$\chi_i:\tilde{T}\to \C^*$ by 
\[
 \chi_i(g)=e^{2\pi\img v_i(g)}
\]
 
Let $g\in G_I$ and $h=\eta_I(g)\in H_{I,\V}$. Then 
$\eta(v(g))\in N_{I}$ is a representative of $h$ in $N_I$ which 
will be denoted by $v(h)$. Then, for $g\in G_I$ and $i\in I$, 
\begin{equation*}\label{eq:vig}
v_i(g)\equiv\l \uiI, v(h)\r\quad \bmod \Z, 
\end{equation*}
and 
\[ 
\chi_i(g)=e^{2\pi\img \l\uiI, v(h)\r}=\chi_I(\uiI,h). 
\]

Let $\del$ be a complete simplicial multi-fan. Define 
\[ \T_T(\del,\V)=\sum_{g\in G_\del}\prod_{i\in\sigmone}
\frac{x_i}{1-\chi_i(g)e^{-x_i}}\in H_T^{**}(\del,\V)_\Q. \]
\begin{prop}\label{prop:ToddDH}
Let $\del$ be a complete simplicial multi-fan. Assume that 
$\xi\in H_T^2(\del,\V)$ is $T$-Cartier. Then 
\[ \pi_*(e^\xi\T_T(\del,\V)) =\sum_{u\in M}\DHF_{\PO(\xi_+)}(u)e^u. \]
Consequently 
\[ p_*(e^\xi\T_T(\del,\V))=\#(\POxi). \]
\end{prop}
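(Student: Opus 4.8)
The plan is to compute $\pi_*(e^\xi\T_T(\del,\V))$ directly from the definitions and to recognize the answer as the left-hand side of the Chern-character identity \eqref{eq:DH3}, namely $\sum_{u\in M}\DHF_{\PO(\xi_+)}(u)e^u$. Since $\iota_I^*$ is the $H_T^*(pt)_\Q$-algebra homomorphism sending $x_i\mapsto\uiI$ for $i\in I$ and $x_i\mapsto 0$ for $i\notin I$, applying it termwise to the series defining $\T_T(\del,\V)$ replaces, for each $g\in G_\del$, the factor $x_i/(1-\chi_i(g)e^{-x_i})$ by its image. For $i\notin I$ this image is the power series $x_i/(1-\chi_i(g)e^{-x_i})$ evaluated at $x_i=0$: it equals $0$ when $\chi_i(g)\neq 1$, because the numerator vanishes while the denominator $1-\chi_i(g)$ does not, and it equals $1$ when $\chi_i(g)=1$, since $x_i/(1-e^{-x_i})\to 1$. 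Consequently a summand indexed by $g$ survives $\iota_I^*$ only if $\chi_i(g)=1$ for every $i\notin I$.

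The crucial step is then the combinatorial identification of the surviving classes: I claim that $g\in G_\del$ satisfies $\chi_i(g)=1$ for all $i\notin I$ if and only if $g\in G_I$. The ``if'' direction is immediate, for a representative $v(g)=\mathbf a$ of a class in $G_I$ has $a_i=0$, hence $\chi_i(g)=1$, for $i\notin I$. For the converse, choose $I'\in\sigmn$ with $g\in G_{I'}$ and a representative $\mathbf a$ supported on $I'$ with $\eta(\mathbf a)\in N$; the hypothesis forces $a_i\in\Z$ for the remaining indices $i\in I'\setminus I$, and subtracting these integers alters $\eta(\mathbf a)$ only by $\sum_{i\in I'\setminus I}a_i v_i\in N$, leaving its image in $N$ while producing a representative supported on $I'\cap I\subseteq I$. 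Thus $g\in G_I$. I expect this vanishing lemma to be the main obstacle, as it is the only place where the lattice structure (the requirements $v_i\in N$ and $\eta(\mathbf a)\in N$) is genuinely used.

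Granting the lemma, and using $\iota_I^*(e^\xi)=e^{\iota_I^*(\xi)}$, one obtains
\[ \iota_I^*\bigl(e^\xi\T_T(\del,\V)\bigr)=e^{\iota_I^*(\xi)}\sum_{g\in G_I}\prod_{i\in I}\frac{\uiI}{1-\chi_i(g)e^{-\uiI}}. \]
Substituting this into the localization formula \eqref{eq:pi} for $\pi_*$, in which the $I$-th term carries the multi-fan weight $w(I)$, cancels the factor $\prod_{i\in I}\uiI$ against the numerators and yields
\[ \pi_*\bigl(e^\xi\T_T(\del,\V)\bigr)=\sum_{I\in\sigmn}\frac{w(I)}{|H_{I,\V}|}\,e^{\iota_I^*(\xi)}\sum_{g\in G_I}\prod_{i\in I}\frac{1}{1-\chi_i(g)e^{-\uiI}}. \]

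It remains to match the inner sum with \eqref{eq:DH3}. The isomorphism $\eta_I:G_I\cong H_{I,\V}$ converts $\sum_{g\in G_I}$ into $\sum_{h\in H_{I,\V}}$, and the recorded identity $\chi_i(g)=\chi_I(\uiI,h)$ rewrites each factor as $1/(1-\chi_I(\uiI,h)e^{-\uiI})$. Reindexing this finite-group sum by $h\mapsto h^{-1}$, under which $\chi_I(\uiI,h)\mapsto\chi_I(\uiI,h)^{-1}$, produces exactly the right-hand side of \eqref{eq:DH3}; since its left-hand side is $\sum_{u\in M}\DHF_{\PO(\xi_+)}(u)e^u$, the first displayed equality of the proposition follows. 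For the stated consequence I would apply the augmentation $H_T^{**}(pt)_\Q\to H_T^0(pt)_\Q=\Q$ to both sides: on the left it yields $p_*(e^\xi\T_T(\del,\V))$ by the definition of $p_*$, while on the right it extracts $\sum_{u\in M}\DHF_{\PO(\xi_+)}(u)=\#(\POxi)$, as observed just after \eqref{eq:DH3}.
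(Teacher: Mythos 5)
Your proposal is correct and follows essentially the same route as the paper's proof: compute $\iota_I^*(e^\xi\T_T(\del,\V))$ factor by factor, observe that only $g\in G_I$ contribute, and then feed the result into the localization formula \eqref{eq:pi} and match it with \eqref{eq:DH3}. The only difference is that you supply an explicit argument for the combinatorial fact that $\chi_i(g)=1$ for all $i\notin I$ if and only if $g\in G_I$ (and you note the harmless reindexing $h\mapsto h^{-1}$), both of which the paper leaves implicit; these additions are correct.
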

\begin{proof}(cf. \cite{HM1} Section 8). 
Let $g\in G_\Delta$ and $I\in\sigmn$. If $g\notin G_I$, 
then there is an element $i\notin I$ such that $\chi_i(g)\not=1$; so 
\[
\frac{x_i}{1-\chi_i(g)e^{-x_i}}=
(1-\chi_i(g))^{-1}x_i+\text{higher degree terms}
\]
for such $i$ .
Hence $i_I^*(\frac{x_i}{1-\chi_i(g)e^{-x_i}})=0$.  Therefore, 
only elements $g$ in $G_I$ contribute 
to $\iota_I^*(\T_T(\Delta,\V))$.  Now suppose $g\in G_I$.  
Then $\chi_i(g)=1$ for $i\notin I$, so 
$\iota_I^*(\frac{x_i}{1-\chi_i(g)e^{-x_i}})=1$ 
for such $i$.  Finally, since $\iota_I^*(x_i)=\uiI$ for $i\in I$, 
we have  
\[
\iota_I^*(\T_T(\Delta,\V))=\sum_{g\in G_I}\prod_{i\in I}
\frac{\uiI}{1-\chi_i(g)e^{-\uiI}}.
\]
This together with \eqref{eq:DH3} shows that 
\begin{align*}
\pi_*(e^\xi\T_T(\Delta,\V))
 =&\pi_*\left(e^\xi\sum_{g\in G_\Delta}\prod_{i=1}^m\frac{x_i}
 {1-\chi_i(g)e^{-x_i}}\right) \\
 =&\sum_{I\in\Sigma^{(n)}}\frac{w(I)e^{\iota_I^*(\xi)}}{|H_{I,\V}|}
 \sum_{g\in G_I}\frac{1}{\prod_{i\in I}(1-\chi_i(g)e^{-\uiI})} \\
 =&\sum_{u\in M}\DHF_{\PO(\xi_+)}(u)e^u.
\end{align*}
\end{proof}

More generally, for $K\in \sigmk$, define $\T_T(\del,\V)_K$ by 
\[ \T_T(\del,\V)_K=\sum_{g\in G_{\del_K}}\prod_{i\in \sigm_K'^{(1)}}
\frac{x_i}{1-\chi_i(g)e^{-x_i}}\in H_T^{**}(\del,\V)_\Q. \]
Then the same proof as for Proposition \ref{prop:ToddDH} yields 
\begin{prop}\label{prop:ToddDH2}
Let $\del$ be a complete simplicial multi-fan. Assume that 
$\xi\in H_T^2(\del,\V)$ is $T$-Cartier. Then  
\[ \pi_*(e^\xi x_K\T_T(\del,\V)_K)
  =\sum_{u\in{A_K^*\cap M}}\DHF_{\PO_K(\xi_{K+})}(u)e^u. \]
for $K\in\sigmk$, where $x_K=\prod_{i\in K}x_i$. Consequently 
\[ p_*(e^\xi x_K\T_T(\del,\V)_K)=\#(\PO(\xi)_K). \]
\end{prop}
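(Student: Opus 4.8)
The plan is to repeat the proof of Proposition \ref{prop:ToddDH} almost verbatim, the one genuinely new ingredient being the extra factor $x_K$, which localizes the push-forward at the top simplices containing $K$ and simultaneously cancels the surplus linear forms produced by $K$. First I would expand, exactly as there,
\[
\pi_*\bigl(e^\xi x_K\T_T(\del,\V)_K\bigr)
=\sum_{I\in\sigmn}\frac{w(I)}{|H_{I,\V}|\prod_{i\in I}\uiI}\,
\iota_I^*\bigl(e^\xi x_K\T_T(\del,\V)_K\bigr),
\]
and evaluate $\iota_I^*$ factorwise, since it is a ring homomorphism with $\iota_I^*(x_i)=\uiI$ for $i\in I$ and $\iota_I^*(x_i)=0$ otherwise. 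Thus $\iota_I^*(x_K)=\prod_{i\in K}\uiI$ when $K\subset I$ and vanishes when $K\not\subset I$, so only the simplices $I\in\Sigma_K^{(n-k)}$ survive --- precisely the index set of the right-hand side of \eqref{eq:DH3}. For such $I$ one has $I\subset K\cup{\Sigma'_K}^{(1)}$, whence $\iota_I^*(\xi)=\iota_I^*(\xi_K)$.

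For these $I$ I would compute $\iota_I^*(\T_T(\del,\V)_K)$ as in Proposition \ref{prop:ToddDH}: for $g\in G_{\del_K}$, a factor $\tfrac{x_i}{1-\chi_i(g)e^{-x_i}}$ with $i\in{\Sigma'_K}^{(1)}\setminus I$ restricts to $0$ if $\chi_i(g)\neq1$ and to $1$ if $\chi_i(g)=1$, while a factor with $i\in I\setminus K$ restricts to $\tfrac{\uiI}{1-\chi_i(g)e^{-\uiI}}$. Hence only $g\in G_I$ contribute and
\[
\iota_I^*\bigl(\T_T(\del,\V)_K\bigr)=\sum_{g\in G_I}\prod_{i\in I\setminus K}
\frac{\uiI}{1-\chi_i(g)e^{-\uiI}}.
\]
Multiplying by $\iota_I^*(x_K)=\prod_{i\in K}\uiI$ and dividing by the factor $\prod_{i\in I}\uiI$ supplied by $\pi_*$, the linear forms cancel --- the $\prod_{i\in K}\uiI$ from $x_K$ and the $\prod_{i\in I\setminus K}\uiI$ from the Todd numerators together give exactly $\prod_{i\in I}\uiI$ --- leaving
\[
\pi_*\bigl(e^\xi x_K\T_T(\del,\V)_K\bigr)
=\sum_{I\in\Sigma_K^{(n-k)}}\frac{w(I)\,e^{\iota_I^*(\xi_K)}}{|H_{I,\V}|}
\sum_{g\in G_I}\prod_{i\in I\setminus K}\frac{1}{1-\chi_i(g)e^{-\uiI}}.
\]

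Finally I would match this with \eqref{eq:DH3}. Using $\eta_I:G_I\cong H_{I,\V}$ and $\chi_i(g)=\chi_I(\uiI,h)$ for $h=\eta_I(g)$, and replacing the summation variable $h$ by $h^{-1}$ (a bijection of the finite abelian group $H_{I,\V}$, under which $\chi_I(\uiI,h)\mapsto\chi_I(\uiI,h)^{-1}$), the inner sum turns into $\sum_{h\in H_{I,\V}}\prod_{i\in I\setminus K}\bigl(1-\chi_I(\uiI,h)^{-1}e^{-\uiI}\bigr)^{-1}$, which is exactly the summand of \eqref{eq:DH3}. This gives the asserted identity $\pi_*(e^\xi x_K\T_T(\del,\V)_K)=\sum_{u\in A_K^*\cap M}\DHF_{\PO_K(\xi_{K+})}(u)e^u$. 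The concluding equality then follows by applying $H_T^{**}(pt)_\Q\to H_T^0(pt)_\Q$, i.e.\ setting $u=0$, since $\#(\PO(\xi)_K)=\sum_{u\in A_K^*\cap M}\DHF_{\PO_K(\xi_{K+})}(u)$ by definition.

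The only real obstacle is bookkeeping, not substance: one must check that ${\Sigma'_K}^{(1)}$ meets each $I\in\Sigma_K^{(n-k)}$ in exactly $I\setminus K$, that the surviving characters $g$ are precisely the subgroup $G_I\subset G_{\del_K}$, and that the numerator $x_K$ cancels the extra denominators $\prod_{i\in K}\uiI$ coming from $\pi_*$, so that the reduction to \eqref{eq:DH3} goes through with the projected data $\xi_K$, $\PO_K(\xi_{K+})$ in place of $\xi$, $\POxi$. Beyond Proposition \ref{prop:ToddDH} and formula \eqref{eq:DH3}, no new idea is required.
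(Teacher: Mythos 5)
Your proof is correct and follows exactly the route the paper intends: the paper gives no separate argument for this proposition, stating only that ``the same proof as for Proposition \ref{prop:ToddDH} yields'' it, and your write-up is precisely that adaptation, with the bookkeeping (only $I\supset K$ and $g\in G_I$ survive, $\iota_I^*(\xi)=\iota_I^*(\xi_K)$, cancellation of $\prod_{i\in K}\uiI$) carried out correctly. The explicit substitution $h\mapsto h^{-1}$ to match the characters in \eqref{eq:DH3} is a detail the paper leaves implicit even in Proposition \ref{prop:ToddDH}; making it explicit is a harmless improvement.
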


The lattice $M\cap A_K^*$ defines a volume element $dV_K$ on $A_K^*$. 
For $\xi=\sum_{i\in K\cup{\Sigma'_K}^{(1)}}d_ix_i\in H_T^2(\Delta_K,\V)_\R$, 
the volume $\vol\PO_{K}(\xi)$ of $\PO_{K}(\xi)$ is defined by 
\[ \vol\PO_{K}(\xi)=\int_{A_K^*}\DHF_{\PO_{K}(\xi)}dV_K^*. \]
\begin{prop}\label{prop:xivol} 
For $\xi=\sum_{i\in\sigmone}d_ix_i\in H_T^2(\Delta,\V)_\R$ 
\[ \frac{1}{|H_{K,\V}|}\vol\POxi_K=p_*(e^\xi x_K). \]
\end{prop}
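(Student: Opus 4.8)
The plan is to compute both sides by localizing at the vertices $u_I=\iota_I^*(\xi)$, $I\in\sigmn$ with $I\supset K$, of the multi-polytope, and then to identify the two resulting vertex sums. They will be seen to coincide term by term up to the single lattice index $|H_{K,\V}|$.

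First I would treat the right-hand side. Since $\iota_I^*(x_K)=\prod_{i\in K}\uiI$ when $K\subset I$ and $\iota_I^*(x_K)=0$ otherwise, while $\iota_I^*(e^\xi)=e^{u_I}$, the push-forward formula \eqref{eq:pi} (with the weight $w(I)$, as in the proof of Proposition \ref{prop:ToddDH}) gives
\[
\pi_*(e^\xi x_K)=\sum_{I\in\sigmn,\ I\supset K}\frac{w(I)\,e^{u_I}}{|H_{I,\V}|\prod_{i\in I\setminus K}\uiI}.
\]
By completeness this lies in $H_T^*(pt)_\Q$, so the right-hand series has no poles; regarding $\uiI$ as a linear form on $N$, the number $p_*(e^\xi x_K)$ is its degree-$0$ part, equivalently the value at $t\to 0$ of the holomorphic function $t\mapsto\sum_{I\supset K}w(I)|H_{I,\V}|^{-1}e^{\l t,u_I\r}/\prod_{i\in I\setminus K}\l t,\uiI\r$.

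Next I would compute $\vol\POxi_K=\int_{A_K^*}\DHF_{\PO_K(\xi)}\,dV_K^*$. Writing $\DHF_{\PO_K(\xi)}=\sum_{I\supset K}(-1)^Iw(I)\phi_I$ as a signed sum of characteristic functions of the simplicial cones $C_K^*(I)^+$, I would introduce the Laplace transform $\int_{A_K^*}\DHF_{\PO_K(\xi)}(u)e^{\l t,u\r}\,dV_K^*$ and evaluate it cone by cone. The cone $C_K^*(I)^+$ has apex $u_I$ and edge directions $(\uiI)^+$, so its Laplace integral equals $e^{\l t,u_I\r}J_I\big/\prod_{i\in I\setminus K}\bigl(-\l t,(\uiI)^+\r\bigr)$, where $J_I$ is the covolume of the lattice spanned by $\{\uiI\}_{i\in I\setminus K}$ with respect to the lattice $M\cap A_K^*$ normalizing $dV_K^*$. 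A short sign count shows that the factor $(-1)^I$ cancels exactly against the replacements $(\uiI)^+=\pm\uiI$, so that the full transform is $\sum_{I\supset K}w(I)J_I\,e^{\l t,u_I\r}/\prod_{i\in I\setminus K}\l t,\uiI\r$. Since $\DHF_{\PO_K(\xi)}$ has compact support, this function is holomorphic at $t=0$ and its value there is $\vol\POxi_K$.

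The two vertex sums now have identical shape, differing only in the coefficient $J_I$ versus $|H_{I,\V}|^{-1}$, so the proposition reduces to the lattice identity $J_I=|H_{K,\V}|/|H_{I,\V}|$. Here $\{\uiI\}_{i\in I\setminus K}$ is precisely the basis of $(N^K)^*$ dual to the images of $\{v_i\}_{i\in I\setminus K}$ in $N^K=N/N_K$, whence $J_I=[N^K:\l\bar v_i:i\in I\setminus K\r]^{-1}=[N:N_{I,\V}+N_K]^{-1}$. Using $N_K\cap N_{I,\V}=N_{K,\V}$ (valid because the $v_i$, $i\in I$, are linearly independent and $N_K$ is primitive) one gets $[N:N_{I,\V}]=[N:N_{I,\V}+N_K]\,[N_K:N_{K,\V}]$, i.e. $|H_{I,\V}|=J_I^{-1}|H_{K,\V}|$, as desired. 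Comparing the two holomorphic functions at $t\to 0$ then yields $\vol\POxi_K=|H_{K,\V}|\,p_*(e^\xi x_K)$, and the argument is valid verbatim for real $\xi$, since $\DHF$, $\vol$ and $p_*$ are all defined there. I expect the main obstacle to be the bookkeeping that identifies the analytic ``value at $t\to 0$'' of each vertex sum with the algebraic degree-$0$ part, together with the justification of holomorphy at $t=0$ (pole cancellation), which rests on completeness on the cohomology side and on the compact support of the Duistermaat--Heckman function on the volume side.
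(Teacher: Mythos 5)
Your proof is correct, but it takes a genuinely different route from the paper's. The paper deduces the proposition from the lattice-point formula of Proposition \ref{prop:ToddDH2}: it first restricts to $T$-Cartier $\xi$ (citing \cite{HM1}, Lemma 8.6, for the reduction of the general real case), writes $\#(\POxi_K)=p_*(e^\xi x_K\T_T(\del,\V)_K)$, and extracts the highest-degree term in the $d_i$; on the left this is $\vol\POxi_K$ (the leading Ehrhart coefficient), while on the right only $g\in G_K$ contribute a nonzero constant term to $\T_T(\del,\V)_K$, giving $|G_K|\,p_*\bigl(\tfrac{\xi^{n-k}}{(n-k)!}\,x_K\bigr)=|H_{K,\V}|\,p_*(e^\xi x_K)$. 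You bypass the counting altogether: you localize $\pi_*(e^\xi x_K)$ at the vertices via \eqref{eq:pi} (correctly reinstating the weight $w(I)$), compute the Laplace transform of $\DHF_{\PO_K(\xi)}$ cone by cone in the Lawrence--Varchenko manner, and match the two vertex sums through the index identity $J_I=|H_{K,\V}|/|H_{I,\V}|$, whose derivation from $N_K\cap N_{I,\V}=N_{K,\V}$ is sound; your sign count cancelling $(-1)^I$ against the replacements $(\uiI)^+=\pm\uiI$ also checks out. What your route buys is that it treats real $\xi$ directly, with no reduction to the $T$-Cartier case and no Todd class machinery; what it costs is the analytic input that the paper gets for free from Proposition \ref{prop:ToddDH2}, namely the compact support of $\DHF_{\PO_K(\xi)}$ (a consequence of completeness, established in \cite{HM1}), which you need so that the entire function $t\mapsto\int_{A_K^*}\DHF_{\PO_K(\xi)}(u)\,e^{\l t,u\r}\,dV_K^*$ agrees with the sum of the meromorphically continued cone integrals and can be evaluated at $t=0$, as well as the bookkeeping identifying that value with the degree-zero part of $\pi_*(e^\xi x_K)$.
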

\begin{proof}
We shall give a proof only for the case where $\xi$ is $T$-Cartier. 
The general case can be reduced to this case, cf. \cite{HM1}, Lemma 8.6. 
By Proposition \ref{prop:ToddDH2}
\[ \#(\POxi_K)=p_*(e^\xi x_K\T_T(\del,\V)_K). \]
The highest degree term with respect to $\{d_i\}$ in the right 
hand side is nothing but $\vol\POxi_K$ and is equal to 
\[ p_*\left(\frac{\xi^{n-k}}{(n-k)!}x_K\right)
\sum_{g\in G_{\Delta_K}}
  \left(\prod_{i\in \Sigma_K^{\prime(1)}}
  \frac{x_i}{1-\chi_i(g)e^{-x_i}}\right)_0, \]
where the suffix $0$ means taking $0$-th degree term.
But 
\[ \left(\prod_{i\in \Sigma_K^{\prime(1)}}
   \frac{x_i}{1-\chi_i(g)e^{-x_i}}\right)_0=
 \begin{cases}
 1, & g\in G_K \\
 0, & g\notin G_K.
 \end{cases} \]
Hence 
\[  \vol\POxi_K=|G_K|p_*\left(\frac{\xi^{n-k}}{(n-k)!}x_K\right)
=|H_{K,\V}|p_*(e^\xi x_K). \] 
\end{proof}

\section{Statement of main results}

Assume that $1\leq k$. 
For $J\in\sigmk$ let $M_J$ be the annihilator of $N_J$ and put
$\omega_J=u_1\wedge\ldots\wedge u_{n-k}\in 
\bigwedge^{n-k}M \subset\bigwedge^{n-k}M_\Q $ where 
$\{u_1,\ldots, u_{n-k}\}$ is an oriented basis of $M_J$. 
Define $f^J(x_i)\in\bigwedge^{n-k+1}M_\Q $ by 
\[
 f^J(x_i)=\iota_I^*(x_i)\wedge\omega_J\ \ 
\text{with $J\subset I\in \sigmn$}.
\] 
$f^J(x_i)$ is well-defined independently of $I$ containing $J$. 
Let $S^*(\bigwedge^{n-k+1}M_\Q)$
be the symmetric algebra over $\bigwedge^{n-k+1}M_\Q$.
$f^J:H_T^2(\del)_\Q\to \bigwedge^{n-k+1}M_\Q$ extends to 
$f^J:H_T^*(\del)_\Q\to S^*(\bigwedge^{n-k+1}M_\Q)$. 
For $x=\prod_ix_i^{\alpha_i}\in H_T^{2k}(\del)_\Q$ we put
\[ f^J(x)=(f^J(x_i))^{\alpha_i}. \]

The definition of $f^J$ depends on the orientations chosen, but 
$\frac{f^J(x)}{f^J(x_J)}$ does not. 
It belongs to the fraction field of 
the symmetric algebra $S^*(\bigwedge^{n-k+1}M_\Q)$ and 
has degree $0$. Hence it can be 
considered as an element of $Rat(\Proj(\bigwedge^{n-k+1}N_\Q))_0$, the 
field of rataional functions of degree $0$ on 
$\Proj(\bigwedge^{n-k+1}N_\Q)$. Let 
$\nu^*:Rat(\Proj(\bigwedge^{n-k+1}N_\Q))_0\to Rat(G_{n-k+1}(N_\Q))_0$
be the induced homomorphism of the Pl\"{u}cker embedding 
$\nu:G_{n-k+1}(N_\Q)\to\Proj(\bigwedge^{n-k+1}N_\Q)$. 
The image $\nu^*(\frac{f^J(x)}{f^J(x_J)})$ will be denoted 
by $\mu(x,J)$. 

Our first main result is stated in the following
\begin{theo}\label{theo:multimain}
Let $\del$ be a complete simplicial multi-fan and 
$x\in H_T^{2k}(\del,\V)_\Q$. 
For any $\xi\in H_T^2(\del)_\Q$ we have 
\begin{equation*}\label{eq:multimain} 
 p_*(e^\xi x)=\sum_{J\in \sigmk}\mu(x,J)
 p_*(e^\xi x_J)\ \ \text{in $Rat(G_{n-k+1}(N_\Q))_0$}.  
\end{equation*}
\end{theo}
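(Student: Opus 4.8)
The plan is to reduce the assertion to a single localized expression and then force its vanishing by evaluating a constant at a point of the plane $E$.

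\emph{Reduction.} Since $p_*$ lowers degree by $2n$ and $x\in H_T^{2k}(\del,\V)_\Q$, only the term $\tfrac{\xi^{n-k}}{(n-k)!}$ of $e^\xi$ contributes, so $p_*(e^\xi x)=p_*\!\big(\tfrac{\xi^{n-k}}{(n-k)!}x\big)$ and similarly for every $x_J$. Fixing a generic $(n-k+1)$-plane $E$, so that each $\mu(x,J)$ specializes to a number $\mu(x,J)(E)=f^J(x)(\eta)/f^J(x_J)(\eta)$ with $\eta=\bigwedge E$, it is enough to show $p_*\!\big(\tfrac{\xi^{n-k}}{(n-k)!}\zeta\big)=0$ for
\[ \zeta=x-\sum_{J\in\sigmk}\mu(x,J)(E)\,x_J\in H_T^{2k}(\del,\V)_\Q. \]
Applying the push-forward formula \eqref{eq:pi} and using $\iota_I^*(x_J)=\prod_{i\in J}u_i^I$ for $J\subset I$ (and $0$ otherwise), I obtain
\[ p_*\!\big(\tfrac{\xi^{n-k}}{(n-k)!}\zeta\big)=\frac{1}{(n-k)!}\sum_{I\in\sigmn}\frac{\iota_I^*(\xi)^{n-k}B_I}{|H_I|\prod_{i\in I}u_i^I},\qquad B_I:=\iota_I^*(\zeta). \]
By completeness of $\del$ the image of $\pi_*$ lies in $H_T^*(pt)_\Q$ \cite{HM1}; as the left-hand side has degree $0$, the displayed sum, a priori a rational function on $N_\Q$, is in fact the constant $c:=p_*\!\big(\tfrac{\xi^{n-k}}{(n-k)!}\zeta\big)\in\Q$.

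\emph{The crux.} The key claim is that, for generic $E$, each $B_I$, regarded as a homogeneous polynomial on $N_\Q$ through $H_T^*(pt)_\Q=S^*M_\Q$, vanishes identically on $E$. Both $\iota_I^*$ and $x\mapsto\mu(x,J)(E)$ are linear in $x$ (the latter because $f^J$ is an algebra homomorphism and $f^J(x_J)(\eta)$ is fixed), so it suffices to treat a monomial $x=\prod_i x_i^{\alpha_i}$. If its support is not contained in $I$ then $\iota_I^*(x)=0$ and $\mu(x,J)(E)=0$ for all $J\subset I$ (since $f^J(x_i)=0$ whenever $i\notin J$), whence $B_I=0$. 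If the support $T$ satisfies $T\subset I$, then writing $s_i=\langle u_i^I,v\rangle$ for $v=\sum_{i\in I}s_iv_i\in E$ and identifying $f^J(x_i)(\eta)$ with $c_J\,p_{(I\setminus J)\cup\{i\}}(E)$ for a constant $c_J$ independent of $i$, the claim $B_I|_E=0$ becomes the identity
\[ \prod_{i\in T}s_i^{\alpha_i}=\sum_{T\subset J\subset I,\,J\in\sigmk}\mu(x,J)(E)\prod_{i\in J}s_i \qquad\text{for all }v=\sum_{i\in I}s_iv_i\in E. \]
Proving this Cramer/Pl\"ucker identity is the main obstacle: the linear conditions cutting out $E$ are precisely the Pl\"ucker relations expressing $v\wedge\bigwedge E=0$, and I expect to establish the identity by induction on $k$, peeling off one factor $s_i$ at a time and substituting these relations.

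\emph{Conclusion.} Granting $B_I|_E=0$, evaluate the rational function equal to $c$ at a generic point $v\in E$. For generic $E$ no hyperplane $\{u_i^I=0\}$ contains $E$, so a generic $v\in E$ has $\prod_{i\in I}\langle u_i^I,v\rangle\neq0$ for every $I$, while every numerator $B_I(v)$ vanishes. Thus each term of the sum is $0$, forcing $c=0$. As this holds for every generic $E$, the identity $p_*(e^\xi x)=\sum_{J\in\sigmk}\mu(x,J)\,p_*(e^\xi x_J)$ holds in $Rat(G_{n-k+1}(N_\Q))_0$.
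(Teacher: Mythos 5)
Your reduction is sound as far as it goes: only $\xi^{n-k}/(n-k)!$ contributes, the localization formula \eqref{eq:pi} converts the assertion into the vanishing of a single constant $c$, and \emph{if} each $B_I=\iota_I^*(x)-\sum_{J\subset I}\mu(x,J)(E)\,\iota_I^*(x_J)$ vanished identically on $E$, evaluating at a generic point of $E$ would indeed force $c=0$. But the entire content of the theorem sits in the claim $B_I|_E=0$, and you do not prove it: you reformulate it as a ``Cramer/Pl\"ucker identity'' and then write that you ``expect to establish'' it by an induction you never carry out. That is a genuine gap, not a routine verification. Note that $B_I|_E$ is a degree-$k$ form on the $(n-k+1)$-dimensional space $E_\Q$, hence lives in a $\binom{n}{k}$-dimensional space; its vanishing is exactly the nontrivial interpolation statement underlying Morelli's formula, and nothing in your text forces it.

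The claim is in fact true and your route can be completed, but it needs two inputs you have omitted. First, one must identify $\mu(x,J)(E)=\langle f^J(x),w_E\rangle/\langle f^J(x_J),w_E\rangle$ with $\iota_I^*(x)(v_{E,J})/\prod_{j\in J}\langle u_j^I,v_{E,J}\rangle$, where $v_{E,J}$ spans the line $E\cap N_J$; this is a determinant computation (the paper's Lemma \ref{lemm:wEvJ}) and, since $\iota_I^*(x_{J'})(v_{E,J})=0$ for $J'\neq J$, it gives $B_I(v_{E,J})=0$ for each of the $\binom{n}{k}$ subsets $J\subset I$. Second, one must show that a degree-$k$ form on $E_\Q$ vanishing at all the points $v_{E,J}$, $J\subset I$, is identically zero: this holds because the restricted monomials $\bigl(\prod_{j\in J}s_j\bigr)\bigr|_E$ evaluate diagonally (and, for generic $E$, nontrivially) on the points $v_{E,J'}$, so they span the $\binom{n}{k}$-dimensional space of degree-$k$ forms on $E$ and evaluation at those points is injective. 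Be aware that the paper avoids this interpolation argument altogether: after Lemma \ref{lemm:wEvJ} it proves the equivalent identity \eqref{eq:1prime} by checking it on the generators $x_{J_0}$ and then showing, via the spanning Lemma \ref{lemm:span} and the surjectivity Lemma \ref{lemm:MKQ}, that both sides vanish on classes containing a factor from $H_T^2(pt)_\Q$ --- a module-theoretic induction rather than a pointwise polynomial identity on $E$.
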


\begin{coro}\label{coro:multiak}
Let $\del$ be a complete simplicial multi-fan in a lattice of 
rank $n$. Assume that $\xi\in H_T^2(\del,\V)$ is $T$-Cartier. Set 
\[ 
\#(\PO(\nu\xi))=\sum_{k=0}^n a_k(\xi)\nu^{n-k}. 
\]
Then we have 
\[
 a_k(\xi)=\sum_{J\in\sigmk}\mu_k(J)\vol \POxiJ 
\]
with
\[
 \mu_k(J)=\frac1{|H_{J,\V}|}\nu^*\left(\sum_{h\in H_{J,\V}}
\prod_{j\in J}\frac1{1-\chi(\ujJ,h)e^{-f^J(x_j)}}\right)_0 
\]
in $\Rat(G_{n-k+1}(N_\C))_0$. 
\end{coro}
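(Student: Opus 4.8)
The plan is to reduce the statement to Theorem~\ref{theo:multimain} applied to the homogeneous components of the equivariant Todd class, and then to evaluate $f^J$ on those components explicitly.

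First I would use Proposition~\ref{prop:ToddDH} to write $\#(\PO(\nu\xi)) = p_*(e^{\nu\xi}\T_T(\del,\V))$. Decomposing $\T_T(\del,\V) = \sum_{l}\T_T^l(\del,\V)$ into its components of cohomological degree $2l$ and expanding $e^{\nu\xi}$, the map $p_*$ lowers degrees by $2n$ and takes values in $\Q = H_T^0(pt)_\Q$, so for each $l$ only the summand $\frac{(\nu\xi)^{n-l}}{(n-l)!}\T_T^l(\del,\V)$ contributes. Collecting powers of $\nu$ yields
\[ a_k(\xi) = p_*\!\left(\frac{\xi^{n-k}}{(n-k)!}\T_T^k(\del,\V)\right) = p_*\!\left(e^\xi\,\T_T^k(\del,\V)\right). \]
Applying Theorem~\ref{theo:multimain} (extended $\C$-linearly in $x$) with $x = \T_T^k(\del,\V)$ and then Proposition~\ref{prop:xivol} in the form $p_*(e^\xi x_J) = \frac{1}{|H_{J,\V}|}\vol\POxiJ$ gives the asserted volume expansion with $\mu_k(J) = \frac{1}{|H_{J,\V}|}\mu(\T_T^k,J)$ in $\Rat(G_{n-k+1}(N_\C))_0$. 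Everything then rests on computing $\mu(\T_T^k,J) = \nu^*\bigl(f^J(\T_T^k(\del,\V))/f^J(x_J)\bigr)$.

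The heart of the argument is the evaluation of $f^J$ on the Todd class. The crucial observation is that $f^J(x_i) = 0$ for every $i\notin J$: if $J\cup\{i\}\subset I\in\sigmn$ then $\langle\uiI,v_j\rangle = 0$ for all $j\in J$, so $\uiI$ lies in $M_{J,\Q}$ and hence $\uiI\wedge\omega_J = 0$, while if no $I\supset J$ contains $i$ then $\iota_I^*(x_i) = 0$ already. Since $f^J$ is an algebra homomorphism, applying it to $\T_T(\del,\V) = \sum_{g\in G_\del}\prod_i\frac{x_i}{1-\chi_i(g)e^{-x_i}}$ replaces each factor with $i\notin J$ by the value of $\frac{y}{1-\chi_i(g)e^{-y}}$ at $y = f^J(x_i) = 0$, namely $1$ if $\chi_i(g) = 1$ and $0$ otherwise. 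Thus only those $g$ with $\chi_i(g) = 1$ for all $i\notin J$ survive; a short lattice computation shows these are exactly the elements of $G_J\cong H_{J,\V}$ (given $g\in G_I$ with $\chi_i(g)=1$ for $i\in I\setminus J$, one subtracts integral multiples of the $v_i$, $i\in I\setminus J$, from a representative $v(h)\in N_I$ to obtain a representative in $N\cap N_{J,\Q} = N_J$). Writing $\chi_j(g) = \chi(\ujJ,h)$ for $g\in G_J$ and $h = \eta_J(g)$, this gives
\[ \frac{f^J(\T_T(\del,\V))}{f^J(x_J)} = \sum_{h\in H_{J,\V}}\ \prod_{j\in J}\frac{1}{1-\chi(\ujJ,h)e^{-f^J(x_j)}}. \]

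Finally, since $f^J(\T_T^l(\del,\V))/f^J(x_J)$ is homogeneous of degree $l-k$ in $S^*(\bigwedge^{n-k+1}M_\Q)$, the degree-$0$ part of the last display is precisely $f^J(\T_T^k(\del,\V))/f^J(x_J)$; applying $\nu^*$ and dividing by $|H_{J,\V}|$ recovers the stated expression for $\mu_k(J)$, the grading being respected by $\nu^*$ since the Pl\"ucker pullback preserves degrees. I expect the main obstacle to lie in the third step: checking that $f^J(x_i)=0$ consistently with the asserted well-definedness, that the surviving index set is \emph{exactly} $G_J$ rather than a larger subgroup of $G_\del$, and that the indeterminate factors are correctly read off as limits of the power-series factors before specialization.
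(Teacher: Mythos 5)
Your proposal is correct and follows essentially the same route as the paper: identify $a_k(\xi)=p_*(e^\xi(\T_T(\del,\V))_k)$ via Proposition~\ref{prop:ToddDH}, apply Theorem~\ref{theo:multimain} and Proposition~\ref{prop:xivol}, and then compute $f^J(\T_T(\del,\V))$ by using $f^J(x_i)=0$ for $i\notin J$ to kill every $g\notin G_J$ and reduce the surviving factors to the sum over $H_{J,\V}$. The only difference is that you spell out two small points the paper leaves implicit (why $f^J(x_i)=0$ for $i\notin J$, and why the survivors are exactly $G_J$), which is harmless.
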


\begin{note}
It can be proved without difficulty that $\mu_k(J)$ does not
depend on the choice of $\V$. Hence one has only consider the 
case where all the $v_i$  are primitive. 
\end{note}

For the following corollary we need to put an additional condition 
on the multi-fan $\del$. 
\begin{coro}\label{coro:multimain}
Let $\del$ be a multi-fan. Assume that there is a torus orbifold $X$ 
such that $\del$ is isomorphic to $\del_X$ and $H^*(X)_\Q$ 
is generated by $H^2(X)_\Q$. Then for 
$x\in H_T^{2k}(\del)_\Q$ the folloing equality holds. 
\[
 \barx=\sum_{J\in \sigmk}\mu(x,J)\barx_J
\ \ \text{in $Rat(G_{n-k+1}(N_\Q))_0$}\otimes_\Q H^{2k}(\del)_\Q, 
 \] 
where 
$\bar{x}$ is the image of $x\in H^*_T(\del)_\Q$ in $H^*(\del)_\Q$.
\end{coro}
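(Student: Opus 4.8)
The plan is to deduce the cohomological identity from the scalar identity of Theorem \ref{theo:multimain} by means of Poincar\'e duality on the orbifold $X$. Write $F=\Rat(G_{n-k+1}(N_\Q))_0$ for the coefficient field and set
\[
 z:=\barx-\sum_{J\in\sigmk}\mu(x,J)\,\barx_J\ \in\ F\otimes_\Q H^{2k}(\del)_\Q ,
\]
so that the goal is to prove $z=0$. First I would record the consequences of the hypothesis that $H^*(X)_\Q$ is generated by $H^2(X)_\Q$: under this assumption the inclusion $H_T^*(\del)_\Q\hookrightarrow H_T^*(X)_\Q$ is an isomorphism, hence so is $H^*(\del)_\Q\cong H^*(X)_\Q$, and under this isomorphism $\int_\del$ is identified with the orbifold integral $\int_X$. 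Consequently $H^*(\del)_\Q$ is generated by $H^2(\del)_\Q$ and, since $X$ is a closed oriented orbifold, the cup-product pairing
\[
 H^{2k}(\del)_\Q\times H^{2(n-k)}(\del)_\Q\longrightarrow\Q,\qquad (\bar a,\bar b)\longmapsto\int_\del \bar a\,\bar b,
\]
is non-degenerate.

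Next I would translate Theorem \ref{theo:multimain} into a statement about this pairing. Since $p_*$ factors through $\int_\del$ and $\int_\del$ detects only the top-degree component, for every $\xi\in H_T^2(\del)_\Q$ and every $y\in H_T^{2k}(\del)_\Q$ one has
\[
 p_*(e^\xi y)=\int_\del e^{\bar\xi}\,\bar y=\frac{1}{(n-k)!}\int_\del \bar\xi^{\,n-k}\,\bar y ,
\]
in particular for $y=x$ and for $y=x_J$. Substituting these into the identity of Theorem \ref{theo:multimain} and clearing the nonzero factor $1/(n-k)!$ gives, for every $\xi$,
\[
 \int_\del \bar\xi^{\,n-k}\,z=0 \quad\text{in } F ,
\]
where $\int_\del$ is extended $F$-linearly.

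The key remaining point is that the classes $\bar\xi^{\,n-k}$ already exhaust $H^{2(n-k)}(\del)_\Q$. Because $H^*(\del)_\Q$ is generated by $H^2(\del)_\Q$, every element of $H^{2(n-k)}(\del)_\Q$ is a $\Q$-linear combination of products $\eta_1\cdots\eta_{n-k}$ with $\eta_i\in H^2(\del)_\Q$; by the usual polarization identity each such product is in turn a $\Q$-linear combination of pure powers $\eta^{\,n-k}$. Hence $\{\bar\xi^{\,n-k}\mid \xi\in H_T^2(\del)_\Q\}$ spans $H^{2(n-k)}(\del)_\Q$, and the displayed vanishing yields $\int_\del w\,z=0$ for all $w\in H^{2(n-k)}(\del)_\Q$.

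Finally I would conclude by base change. The pairing above is non-degenerate over $\Q$; since $F$ is a field extension of $\Q$ and the cohomology groups are finite-dimensional, the $F$-bilinear extension of the pairing to $F\otimes_\Q H^{2k}(\del)_\Q$ and $F\otimes_\Q H^{2(n-k)}(\del)_\Q$ remains non-degenerate. As $z$ pairs to zero with all of $H^{2(n-k)}(\del)_\Q$, hence with all of $F\otimes_\Q H^{2(n-k)}(\del)_\Q$, it follows that $z=0$, which is precisely the asserted equality. The main obstacle is the first step---identifying $H^*(\del)_\Q$ with $H^*(X)_\Q$ so that genuine orbifold Poincar\'e duality and $H^2$-generation are both available; once these are in hand, the passage from the scalar identity to the cohomological one is the formal duality argument above.
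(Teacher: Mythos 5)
Your proposal is correct and follows essentially the same route as the paper: reduce to showing that $p_*(e^\xi y)=0$ for all $\xi\in H_T^2(\del)_\Q$ forces $\bar y=0$, using the identification $H^*(\del)_\Q\cong H^*(X)_\Q$, generation by $H^2$, and Poincar\'e duality. The paper states this step tersely; your expansion of $e^\xi$, the polarization argument showing the powers $\bar\xi^{\,n-k}$ span $H^{2(n-k)}(\del)_\Q$, and the base change of the nondegenerate pairing to the coefficient field are exactly the details it leaves implicit.
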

\begin{rem}\label{rem:duality}
If $H^*(X)_\Q$ is generated by $H^2(X)_\Q$, then $H_T^*(\del_X)_\Q=
H_T^*(X)_\Q$. cf. \cite{Mas}, \cite{MP}. 
\end{rem}

\begin{rem}
When $\del$ is the fan associated to a convex lattice polytope $P$ and 
$\xi=D$, the Cartier divisor associated to $P$, 
we know (see, e.g. \cite{Ful}) that
\[
\mu_0(o)=1,\ a_0(\xi)=\vol\POxi,\ \mu_1(i)=\frac12, 
\ a_1(\xi)=\frac12\sum_{i\in\sigmone}\vol\POxi_i. 
\]
This is also true for simplicial multi-fans and $T$-Cartier $\xi$. 
\end{rem}
As to $a_n$ 
we have 
\[ a_n(\xi)=\Td[\del]. 
\]
In fact $a_n(\xi)=p_*(\T_T(\del,\V))=\left(\pi_*(\T_T(\del,\V))\right)_0$. 
Thus the above equality follows from the following rigidity property: 
\begin{theo}
Let $\del$ be a complete simplicial multi-fan. Then 
\[
\pi_*(\T_T(\del,\V))=\left(\pi_*(\T_T(\del,\V))\right)_0=\Td[\del]. 
\]
\end{theo}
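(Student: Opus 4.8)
The plan is to reduce the whole statement to a single assertion about the Duistermaat--Heckman function of the \emph{infinitesimal} multi-polytope, namely the case $\xi=0$, and then to determine that function by a limiting argument along a generic one-parameter subgroup. First I would note that $\xi=0$ is $T$-Cartier, since $\iota_I^*(0)=0\in M$ for every $I\in\sigmn$. Proposition \ref{prop:ToddDH} then gives
\[
 \pi_*(\T_T(\del,\V))=\sum_{w\in M}\DHF_{\PO(0_+)}(w)e^w,
\]
while the multiplicative identity \eqref{eq:DH2}, taken with $K=o$ and $\iota_I^*(\xi)=0$, reads
\[
 \sum_{w\in M}\DHF_{\PO(0_+)}(w)t^w
 =\sum_{I\in\sigmn}\frac{w(I)}{|H_{I,\V}|}\sum_{h\in H_{I,\V}}
 \prod_{i\in I}\frac{1}{1-\chi_I(\uiI,h)^{-1}t^{-\uiI}},
\]
where $\PO(0_+)$ is the $\epsilon$-expansion of the zero multi-polytope. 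Both equalities of the theorem will follow at once if I can show that $\DHF_{\PO(0_+)}$ is a point mass at the origin of weight $\Td[\del]=\deg(\del)$, i.e. $\DHF_{\PO(0_+)}(0)=\deg(\del)$ and $\DHF_{\PO(0_+)}(w)=0$ for $w\neq0$. Indeed the displayed additive formula would then collapse to $\pi_*(\T_T(\del,\V))=\deg(\del)\,e^0=\deg(\del)$, a constant, so it would equal its own degree-$0$ part and also $\Td[\del]$.

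Next I would fix a generic $v\in N$ and substitute $t^u=\tau^{\l u,v\r}$ for a single real variable $\tau>0$. Because the Duistermaat--Heckman function has finite support, the left-hand side becomes an honest Laurent polynomial $L(\tau)=\sum_{w}\DHF_{\PO(0_+)}(w)\tau^{\l w,v\r}$ in $\tau$ (integer exponents, as $w\in M$, $v\in N$). On the right-hand side each factor is $\bigl(1-\chi_I(\uiI,h)^{-1}\tau^{-\l \uiI,v\r}\bigr)^{-1}$ with $\l \uiI,v\r\neq0$ by genericity. As $\tau\to\infty$ such a factor tends to $1$ when $\l \uiI,v\r>0$ and to $0$ when $\l \uiI,v\r<0$; since $v=\sum_{i\in I}\l\uiI,v\r v_i$, the product over $i\in I$ survives (with limit $1$) precisely when all coefficients are positive, that is when $v\in\Int C(I)$, and then $\sum_{h\in H_{I,\V}}1=|H_{I,\V}|$. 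Hence $\lim_{\tau\to\infty}(\text{RHS})=\sum_{v\in\Int C(I)}w(I)=d_v=\deg(\del)$. The identical computation with $\tau\to0$ replaces the surviving condition by $-v\in\Int C(I)$ and produces $d_{-v}$, which equals $\deg(\del)$ as well because $\del$ is complete and $d$ is independent of the generic vector.

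Finally I would invoke the elementary fact that a Laurent polynomial in $\tau$ possessing finite limits both as $\tau\to0$ and as $\tau\to\infty$ is necessarily constant, equal to their common value; thus $L(\tau)\equiv\deg(\del)$. Choosing $v$ generic enough that $w\mapsto\l w,v\r$ is injective on the finite support and vanishes only at $w=0$, the coefficientwise reading of $L(\tau)\equiv\deg(\del)$ yields exactly $\DHF_{\PO(0_+)}(0)=\deg(\del)$ and $\DHF_{\PO(0_+)}(w)=0$ for $w\neq0$. Feeding this back into the additive formula gives $\pi_*(\T_T(\del,\V))=\deg(\del)=\Td[\del]$, a constant, which is both asserted equalities. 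I expect the only genuinely delicate step to be the limit computation of the second paragraph: identifying, through the signs of $\l\uiI,v\r$, the surviving summands with the cones containing $\pm v$ and hence the two limits with $d_{v}=d_{-v}=\deg(\del)$ is where the completeness hypothesis does the essential work, the remainder being formal.
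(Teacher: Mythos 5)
Your argument is correct, and it is in substance the proof the paper points to: the paper offers no argument of its own for this theorem but refers to \cite{HM1}, Theorem 7.2, whose proof is precisely this rigidity-by-limits computation --- specialize the character-valued identity of Theorem \ref{theo:DH} (with $\xi=0$, which is indeed $T$-Cartier) along a generic one-parameter subgroup, identify the limits as $\tau\to\infty$ and $\tau\to 0$ with $d_v$ and $d_{-v}$, and invoke pre-completeness to conclude that the Laurent polynomial is the constant $\deg(\del)=\Td[\del]$. The one standing input you rely on beyond the limit computation, namely that $\DHF_{\PO(0_+)}$ has finite support on $M$ so that the left-hand side really is a Laurent polynomial in $\tau$ (with the exponents $\l \uiI,v\r$ on the right being merely rational, which is harmless for $\tau>0$), is exactly what the Note following Theorem \ref{theo:DH} grants, so there is no gap.
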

See \cite{HM1} Theorem 7.2 and its proof. Note that $\Td[\del]=1$ 
for any complete simplicial fan $\del$. 

The explicit formula for $\pi_*(\T_T(\del,\V))$ is given by 
\[
\pi_*(\T_T(\del,\V))=\sum_{I\in\sigmn}\frac{w(I)}{|H_{I,\V}|}
\sum_{h\in H_{I,\V}}\prod_{i\in I}\frac1{1-\chi_I(\uiI, h)e^{-\uiI}}. 
\]
This does not depend on the choice of $\V$ and is in fact equal 
to $\Td[\del]$. 

Let $\del$ be a (not necessarily complete) simplicial fan 
in a lattice of rank $n$. Set 
\[
Td_T(\del)=\sum_{I\in \sigmn}\frac1{|H_I|}\sum_{h\in H_I}\prod_{i\in I}
\frac1{1-\chi_I(\uiI, h)e^{-\uiI}}\in S^{-1}H_T^{**}(pt)_\Q. 
\]
For a simplex $I$ let $\sigm(I)$ be the simplicial complex consisting 
of all faces of $I$. 
For a fan $\del(I)=(\sigm(I),C)$, $Td_T(\del(I))$ is denoted by $Td_T(I)$. 
\begin{theo}
$Td_T(I)$ is additive with respect to simplicial subdivisions of  
the cone $C(I)$. Namely, if $\del$ is the fan determined 
by a simplicial subdivison of $C(I)$, then the following equality holds 
\begin{equation*}\label{eq:addTd}
 Td_T(\del)=Td_T(I). 
\end{equation*}
\end{theo}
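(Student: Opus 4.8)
The plan is to derive the equality from the rigidity property stated just above, namely that $\pi_*(\T_T(\del',\V))=\Td[\del']$ for every complete simplicial multi-fan $\del'$, together with the explicit formula $\pi_*(\T_T(\del',\V))=\sum_{I\in\sigmn}\frac{w(I)}{|H_{I,\V}|}\sum_{h\in H_{I,\V}}\prod_{i\in I}\frac1{1-\chi_I(\uiI,h)e^{-\uiI}}$. For an ordinary fan one has $w\equiv 1$, and this sum is precisely $Td_T(\del')$, so it suffices to manufacture a \emph{complete} multi-fan whose $\pi_*(\T_T)$ reads off the difference $Td_T(\del)-Td_T(I)$ while having Todd genus $0$.

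Concretely, I would form the simplicial multi-fan $\del^\sharp$ whose simplicial complex is the union of $\del$ and of the simplex $I$ with all its faces, assigning weight $+1$ to the top cones of $\del$ and weight $-1$ to $C(I)$, and choosing the prescribed edge vectors $\V$ so that the generators of $C(I)$ are the same in both pieces. For a generic $v\in N_\R$ the integer $d_v$ vanishes: outside $C(I)$ no top cone contains $v$, while inside $C(I)$ the unique top cone of $\del$ through $v$ contributes $+1$, cancelled by the $-1$ coming from $C(I)$. Thus $\del^\sharp$ is pre-complete with $\Td[\del^\sharp]=0$. Granting that $\del^\sharp$ is moreover complete, rigidity gives $\pi_*(\T_T(\del^\sharp,\V))=0$, and evaluating the explicit formula with these weights turns this into $\sum_{I'}Td_T(I')-Td_T(I)=Td_T(\del)-Td_T(I)=0$, where $I'$ runs over the top cones of $\del$. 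This is exactly the asserted identity.

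Everything therefore hinges on checking completeness, i.e. that the projected multi-fan is pre-complete for each wall $J$ of $\del^\sharp$; after projecting to $N^J_\R\cong\R$ this is the balancing of the signed ray-weights on the two sides of the origin. For a wall $J$ interior to $C(I)$ this is immediate: such a $J$ uses a ray not among the generators of $C(I)$, so $I$ does not contain $J$, and $J$ is a facet of exactly two top cones of $\del$, on opposite sides and each of weight $+1$. If $J$ is a facet of the simplex $I$ and $\del$ leaves $\partial C(I)$ undivided, then the cones of $\del^\sharp$ through $J$ are the adjacent inner cone of $\del$ (weight $+1$) and $C(I)$ (weight $-1$), which project onto the same ray and cancel. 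In these cases $\del^\sharp$ is complete and the argument is finished.

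The genuine obstacle is the case where $\del$ subdivides a facet $F$ of $C(I)$: a wall in the relative interior of $F$ is then a facet of a single cone of $\del$ and is not a face of the coarse simplex $I$, so balance fails there and this naive $\del^\sharp$ is not complete. I would repair this in two moves. First, replace $C(I)$ (weight $-1$) by the radial subdivision $\del^\flat$ of $C(I)$ obtained by coning the induced boundary subdivision $\del|_{\partial C(I)}$ from a ray $\rho$ through an interior point; since $\del^\flat$ agrees with $\del$ along $\partial C(I)$, every wall now carries a cancelling pair, $\del^\sharp$ is complete of degree $0$, and the above gives $Td_T(\del)=Td_T(\del^\flat)$. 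Second, compare $\del^\flat$ with the radial subdivision over the undivided boundary, whose top cones are spanned by $\rho$ together with the facets $F_j$ of $C(I)$; this subdivision leaves $\partial C(I)$ intact, so the easy case already treated identifies its $Td_T$ with $Td_T(I)$. The remaining equality between the two radial subdivisions is where the main work lies: both are cones from the same ray $\rho$ over boundary subdivisions that differ only by the $(n-1)$-dimensional refinements $\del|_{F_j}$ of the facets $F_j$, and I expect the radial construction to be compatible with the theorem one dimension lower, so that the $(n-1)$-dimensional additivity for each $\del|_{F_j}$ propagates to the full $Td_T$. An induction on $n$—trivial for $n=2$, since a subdivision of a $2$-cone never divides its edges—then completes the proof, and it is this reduction to the lower-dimensional statement, rather than any new residue computation, that constitutes the crux.
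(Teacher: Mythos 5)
You should first be aware that the paper does not actually prove this theorem: it reduces to the nonsingular case and cites Morelli \cite{Mor}, so what you are attempting is an independent, self-contained argument. The first two steps of your plan are sound and are a nice use of the multi-fan machinery. The multi-fan $\del\cup(-I)$ is complete precisely when $\del$ does not subdivide $\partial C(I)$, and you correctly diagnose the failure otherwise; replacing $I$ by the radial subdivision $\del^\flat=\rho\ast(\del|_{\partial C(I)})$ does produce a complete degree-zero multi-fan $\del\cup(-\del^\flat)$ (every wall, whether interior to $\del$, of the form $\rho\ast\sigma$, or lying in $\partial C(I)$, is a facet of exactly two top cones with the required balance), so rigidity legitimately gives $Td_T(\del)=Td_T(\del^\flat)$, and the radial subdivision over the undivided boundary has $Td_T$ equal to $Td_T(I)$ by your easy case.

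The genuine gap is the remaining equality $Td_T(\del^\flat)=Td_T(\rho\ast\{F_j\})$, which you defer to an unproved ``compatibility of the radial construction with the theorem one dimension lower''; this is exactly where the whole difficulty of the theorem is concentrated, and it does not follow from what you have set up. Concretely: (a) the same completeness device cannot be used to compare the two radial subdivisions, since in $(\rho\ast\mathcal{B})\cup(-(\rho\ast\{F_j\}))$ a top cone $\tau\subsetneq F_j$ of the boundary subdivision $\mathcal{B}$ is a wall lying in only one top cone, namely $\rho\ast\tau$ --- the identical defect you started with, now relocated to $\partial C(I)$ again; equivalently, $\rho\ast(\mathcal{B}|_{F_j})$ is an $n$-dimensional subdivision of $\rho\ast F_j$ that again subdivides its boundary, so the reduction is circular rather than dimension-lowering. (b) The needed identity $\sum_{\tau\subset F_j}Td_T(\rho\ast\tau)=Td_T(\rho\ast F_j)$ is not a formal consequence of the rank-$(n-1)$ statement for $\mathcal{B}|_{F_j}$: there is no factorization of $Td_T(\rho\ast\tau)$ through $Td_{T'}(\tau)$, because the dual basis vector $u_i^{\rho\ast\tau}$ attached to a ray $i$ of $\tau$ is the unique lift (annihilating $v_\rho$) of the corresponding $(n-1)$-dimensional dual vector and hence depends on $\rho$, the group $H_{\rho\ast\tau}=N/\Z\langle v_\rho,v_1,\dots,v_{n-1}\rangle$ differs from $N_\tau/\Z\langle v_1,\dots,v_{n-1}\rangle$, and the averaging over $h\in H_{\rho\ast\tau}$ entangles the factor coming from $\rho$ with the others. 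So the induction does not close as described. To finish along elementary lines one needs a genuinely new ingredient --- for instance the identification of $Td_T(\sigma)$ with the lattice-point generating function of $-\sigma^\vee$ and the valuation identity $\sum_\alpha[\sigma_\alpha^\vee]\equiv[C(I)^\vee]$ modulo cones containing lines, which is essentially the route taken in \cite{Mor}.
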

For the proof it is sufficient to assume that $\del(I)$ and $\del$ are 
non-singular. In such a form a proof is given in \cite{Mor}. 
The following corollary ensures that $\mu_k(J)$ can be defined 
for general polyhedral cones as pointed out by Morelli in \cite{Mor}. 
\begin{coro}
Let $\del(J)=(\sigm(J),C)$ be a fan in a lattice $N$ of rank $n$ 
where $J$ is a simplex of dimension $k-1$. Then 
$\mu_k(J)\in \Rat(G_{n-k+1}(N_\Q))_0$ is additive with respect to 
simplicial subdivisions of $C(J)$. 
\end{coro}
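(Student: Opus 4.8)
The plan is to identify $\mu_k(J)$ as the image of the equivariant Todd class $Td_T(J)$ of the single cone $C(J)$ under an operation that depends only on the linear span of $C(J)$, and then to deduce additivity of $\mu_k$ from the additivity of $Td_T$ already established. Throughout I take the edge vectors $\V$ primitive, which is harmless by the Note following Corollary \ref{coro:multiak}.

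First I would make the relation between $\mu_k(J)$ and $Td_T(J)$ explicit. Regarding $C(J)$ as a full dimensional cone in the rank $k$ lattice $N_J$, its class
\[ Td_T(J)=\frac1{|H_{J,\V}|}\sum_{h\in H_{J,\V}}\prod_{j\in J}\frac1{1-\chi_J(\ujJ,h)e^{-\ujJ}} \]
is a rational function in the variables $\ujJ\in(N_{J,\V}^*)_\Q=(M/M_J)_\Q$. The assignment $u\mapsto\bar u\wedge\omega_J$, for $\bar u$ any lift of $u\in(M/M_J)_\Q$ to $M_\Q$, is well defined, since two lifts differ by an element $m\in M_J$ and $m\wedge\omega_J=0$; it gives a degree preserving, injective algebra homomorphism
\[ \Phi_J\colon S^*\bigl((M/M_J)_\Q\bigr)\longrightarrow S^*\bigl(\textstyle\bigwedge^{n-k+1}M_\Q\bigr),\qquad \Phi_J(\ujJ)=f^J(x_j), \]
the last equality because $f^J(x_j)=\uiI\wedge\omega_J$ and $\uiI\equiv\ujJ\ (\mathrm{mod}\ M_J)$ for $j\in J\subset I$. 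As the numbers $\chi_J(\ujJ,h)$ are scalars left fixed by $\Phi_J$, comparison with the formula of Corollary \ref{coro:multiak} yields, after passage to fraction fields,
\[ \mu_k(J)=\nu^*\Bigl(\bigl(\Phi_J(Td_T(J))\bigr)_0\Bigr). \]

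The crux is that $\Phi_J$, and hence the operation $\nu^*\circ(\;)_0\circ\Phi_J$, is unchanged by subdivision. Let $\del'$ be a simplicial subdivision of $C(J)$ with top simplices $J_1,\dots,J_r$; each $C(J_s)$ is $k$ dimensional with the same linear span as $C(J)$. Then $N\cap C(J_s)$ and $N\cap C(J)$ span the same rational subspace, so their primitive hulls coincide, $N_{J_s}=N_J$, whence $M_{J_s}=M_J$ and, with the orientation of $M_{J_s}$ induced from that of $M_J$, $\omega_{J_s}=\omega_J$. Consequently every $\Phi_{J_s}$ equals $\Phi_J$, and every $Td_T(J_s)$ lies in the one ring of rational functions on $(N_J^*)_\Q$, the new edges of $\del'$ merely furnishing additional vectors $\ujJ\in(N_J^*)_\Q$.

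Finally I would apply the additivity of $Td_T$ proved just above, inside the rank $k$ lattice $N_J$, to obtain $\sum_{s=1}^r Td_T(J_s)=Td_T(J)$. Since $\Phi_J$ is additive, $(\;)_0$ is a linear projection onto a graded piece, and $\nu^*$ is a ring homomorphism, all three commute with the finite sum, and therefore
\[ \sum_{s=1}^r\mu_k(J_s)=\nu^*\Bigl(\bigl(\Phi_J\bigl(\textstyle\sum_s Td_T(J_s)\bigr)\bigr)_0\Bigr)=\nu^*\Bigl(\bigl(\Phi_J(Td_T(J))\bigr)_0\Bigr)=\mu_k(J), \]
which is the asserted additivity. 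I expect the main obstacle to be the first step: pinning down the identity $\mu_k(J)=\nu^*\circ(\;)_0\circ\Phi_J(Td_T(J))$ and verifying that $\Phi_J$ depends only on the span of $C(J)$ (the points $N_{J_s}=N_J$ and $\omega_{J_s}=\omega_J$). Once these are in hand, the passage from additivity of $Td_T$ to that of $\mu_k$ is purely formal.
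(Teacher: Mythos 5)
Your proof is correct and takes exactly the route the paper intends: the corollary is stated as an immediate consequence of the additivity theorem for $Td_T$, and your homomorphism $\Phi_J$ together with the observations $N_{J_s}=N_J$, $\omega_{J_s}=\omega_J$ supplies precisely the (omitted) bookkeeping that transports that additivity through the formula of Corollary \ref{coro:multiak} to $\mu_k(J)$.
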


\section{Proof of Theorem \ref{theo:multimain} and Corollary \ref{coro:multimain}}
Proof of Theorem \ref{theo:multimain}. 
For a primitive sublattice $E$ of $N$ of rank $n-k+1$ let 
$w_E\in\bigwedge^{n-k+1}N$ be a representative of 
$\nu(E)\in \Proj(\bigwedge^{n-k+1}N_\Q)$. The equality in 
Theorem \ref{theo:multimain}
is equivalent to the condition that 
\[ p_*(e^\xi x)=\sum_{J\in \sigmk}\frac{f^J(x)}{f^J(x_J)}(w_E)p_*(e^\xi x_J)
 \ \ \text{holds for every generic $E$}.  \]

Let $E$ be a generic primitive sublattice in $N$ of rank $n-k+1$. 
The intersection $E\cap N_J$ has rank one for each $J\in\sigmk$. 
Take a non-zero vector $v_{E,J}$ in $E\cap N_J$. 
(One can choose $v_{E,J}$ to be the unique primitive vector contained in 
$E\cap C(J)$. But any non-zero vector 
will suffice for the later use.)
For $x\in H_T^{2k}(\Delta)$ and $J\in \sigmk$ the value of 
$\iota_I^*(x)$ evaluated on $v_{E,J}$ for $I\in \sigmn$ containing $J$ 
depends only on $\iota_J^*(x)$ so that it will be denoted by 
$\iota_J^*(x)(v_{E,J})$. Similarly 
we shall simply write $\l u_j^J, v_{E,J}\r$ 
instead of $\l u_j^I, v_{E,J}\r$. 
\begin{lemm}\label{lemm:wEvJ}
Put $f_j^J=u_j^J\wedge \omega_J$. Then 
\[ a\l f_j^J, w_E\r =\l u_j^J, v_{E,J}\r, \]
where $a$ is a non-zero constant depending only on $v_{E,J}$. 
\end{lemm}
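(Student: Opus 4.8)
The plan is to recognise the left-hand side as a contraction and then to pin down the resulting vector in $N_\Q$ by two dimension counts together with the genericity of $E$. Since the natural pairing between $\bigwedge^{n-k+1}M_\Q$ and $\bigwedge^{n-k+1}N_\Q$ is perfect, the assignment $u\mapsto \langle u\wedge\omega_J,\,w_E\rangle$ is a linear functional on $M_\Q$ and is therefore represented by a unique vector $c\in N_\Q$; that is, $\langle u\wedge\omega_J,\,w_E\rangle=\langle u,c\rangle$ for every $u\in M_\Q$. With this $c$ the asserted formula reads $\langle f_j^J,w_E\rangle=\langle \ujJ,c\rangle$, so the whole lemma reduces to showing that $c$ is a nonzero scalar multiple of $v_{E,J}$, with the scalar independent of $j$.

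First I would show $c\in N_{J,\Q}$. For $u\in M_{J,\Q}$ the element $u\wedge\omega_J$ lies in $\bigwedge^{n-k+1}M_{J,\Q}$, which vanishes because $M_J$ has rank $n-k$; hence $\langle u,c\rangle=0$ for all $u\in M_{J,\Q}$, and since $M_J$ is the annihilator of $N_J$ this forces $c\in N_{J,\Q}$ by biduality. Next I would show $c\in E_\Q$. Writing a representative $w_E=e_1\wedge\cdots\wedge e_{n-k+1}$ with $e_l\in E$ and expanding the pairing by the Laplace rule gives $c=\sum_{l}(-1)^{l-1}\langle\omega_J,\,e_1\wedge\cdots\widehat{e_l}\cdots\wedge e_{n-k+1}\rangle\,e_l$, which manifestly lies in the span of the $e_l$, i.e. in $E_\Q$.

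With both containments in hand, $c\in E_\Q\cap N_{J,\Q}$. The excerpt already records that this intersection has rank one (for generic $E$ the count $\dim E+\dim N_J-\dim N=(n-k+1)+k-n=1$ forces equality), so it is spanned by $v_{E,J}$ and $c=\lambda\,v_{E,J}$ for some $\lambda\in\Q$. Substituting back, $\langle f_j^J,w_E\rangle=\langle \ujJ,c\rangle=\lambda\langle \ujJ,v_{E,J}\rangle$, and setting $a=\lambda^{-1}$ yields the identity; since $\lambda$ is built only from $w_E$, $\omega_J$ and $v_{E,J}$, the constant $a$ does not involve the index $j$ (nor the class $x$), which is exactly the independence the later argument needs.

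The one point requiring real care, and the place genericity is used a second time, is to guarantee $\lambda\neq 0$, i.e. $c\neq 0$. I would deduce this from the same rank count: since $E\cap N_J$ is one-dimensional and $\dim E=n-k+1$, the projection $E_\Q\to (N/N_J)_\Q$ is onto the $(n-k)$-dimensional space $(N/N_J)_\Q\cong M_{J,\Q}^{*}$, so some $(n-k)$-subset of the $e_l$ projects to a basis; the corresponding Laplace coefficient $\langle\omega_J,\,e_1\wedge\cdots\widehat{e_l}\cdots\wedge e_{n-k+1}\rangle$ is then nonzero, whence $c\neq0$. Everything else is the routine bilinear-algebra bookkeeping of contraction, so I expect no further obstruction.
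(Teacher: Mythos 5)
Your argument is correct, and it reaches the same underlying determinant identity as the paper while organizing it in the opposite direction. The paper fixes a basis $w_1,\dots,w_{n-k+1}$ of $E$, writes $v_{E,J}=\sum_l c_l w_l$, observes that the equations $\langle u_i,v_{E,J}\rangle=0$ (for an oriented basis $u_1,\dots,u_{n-k}$ of $M_J$) force $(c_1,\dots,c_{n-k+1})$ to be proportional to the cofactor vector $(A_1,\dots,A_{n-k+1})$ of the rank-$(n-k)$ matrix $(\langle u_i,w_l\rangle)$, and then recognizes the resulting cofactor expansion of $\langle u_j^J,v_{E,J}\rangle$ as the determinant computing $\langle f_j^J,w_E\rangle$. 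You instead encode the contraction $u\mapsto\langle u\wedge\omega_J,w_E\rangle$ as a vector $c\in N_\Q$, place it in $N_{J,\Q}$ (vanishing of $\bigwedge^{n-k+1}M_{J,\Q}$) and in $E_\Q$ (Laplace expansion), and use $\dim(E_\Q\cap N_{J,\Q})=1$ to conclude $c=\lambda v_{E,J}$. The two proofs are linear-algebraically equivalent --- your $c$ is exactly the paper's cofactor vector $\sum_l A_l w_l$ read back into $N_\Q$ --- but your packaging makes the geometric content explicit (the contraction lands on the line $E\cap N_J$), and your final step showing $\lambda\neq 0$ via surjectivity of $E_\Q\to (N/N_J)_\Q$ supplies the justification that the paper leaves implicit in its bare assertion that $(\langle u_i,w_l\rangle)$ has rank $n-k$; both points use genericity of $E$ only through $\dim(E_\Q\cap N_{J,\Q})=1$. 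You also correctly isolate the feature needed downstream, namely that the constant $a=\lambda^{-1}$ is independent of $j$ (and of $x$), so that it cancels in the degree-zero ratio $f^J(x)/f^J(x_J)$.
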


\begin{proof}
Take an oriented basis $u_1,\ldots,u_{n-k}$ of $M_J$. Take also 
a basis $w_1,\ldots,w_{n-k+1}$ of $E$ and write $v_{E,J}
=\sum_lc_lw_l$. Then, since $\l u_i,v_{E,J}\r=0$, 
\[ \sum_{l=1}^{n-k+1}c_l\l u_i, w_l\r =0, \quad 
 \text{for $i=1,\ldots,n-k$}. \] 
The matrix $\left(a_{il}\right)=\left(\l u_i,w_l\r\right)$ has rank $n-k$ and
we get 
\[ (c_1,\ldots, c_{n-k+1})=a(A_1,\ldots, A_{n-k+1}),\quad a\not=0, \]
where
\[ A_l=(-1)^{l-1}\det\begin{pmatrix}
 a_{11} &\dots &\widehat{a_{1l}} &\dots &a_{1\, n-k+1} \\
 \hdotsfor{5} \\
 \hdotsfor{5} \\
 a_{n-k\, 1} &\dots &\widehat{a_{n-k\, l}} &\dots &a_{n-k\, n-k+1} 
 \end{pmatrix}. \]
Then 
\[ \begin{split}
 \l u_j^J,v_{E,J}\r
    &=\sum_{l=1}^{n-k+1}c_l\l u_j^J,w_l\r \\
    &=a\sum_{l=1}^{n-k+1}\l u_j^J,w_l\r A_l \\
    &=a\det\begin{pmatrix}
     \l u_j^J,w_1\r &\dots &\l u_j^J,w_{n-k+1}\r \\
     \l u_1,w_1\r   &\dots &\l u_1,w_{n-k+1}\r \\
     \hdotsfor{3} \\
     \l u_{n-k},w_1\r   &\dots &\l u_{n-k},w_{n-k+1}\r 
     \end{pmatrix} \\
    &=a\l f_j^J, w_E \r
   \end{split} \] 
where $f_j^J=u_j^J\wedge u_1\cdots\wedge u_{n-k}$ and 
$w_E=w_1\wedge\cdots\wedge w_{n-k+1}$.     
\end{proof}

\begin{rem}
Let $X$ be a torus orbifold of dimension $2n$ 
and $\Delta$ the associated multifan.  
Let $T=T^n$ be the compact torus acting on $X$. $E\cap N_J$ determines 
a subcircle $T^1_{E,J}$ of $T$. 
Then $T^1_{E,J}$ pointwise fixes an invariant complex suborbifold $X_J$. 
Some of its covering acts on the normal vector space of an Euclidean covering 
of an invariant neighborhood at each generic point in 
$X_J$. Then the numbers $\l u_j^J, v_{E,J}\r$ are weights of this 
action. 
\end{rem}

Lemma \ref{lemm:wEvJ} implies that 
\[ \frac{f^J(x)}{f^J(x_J)}(w_E)=\frac{\iota_J^*(x)}
 {\prod_{j\in J}u_j^J}(v_{E,J}). \]
Then the equality in Theorem \ref{theo:multimain} holds if and only if  
\begin{equation}\label{eq:1prime} 
 p_*(e^\xi x)=\sum_{J\in \sigmk} \frac{\iota_J^*(x)}
 {\prod_{j\in J}u_j^J}(v_{E,J})p_*(e^\xi x_J) 
\end{equation} 
holds for every generic $E$. 

The following lemma is easy to prove, cf. e.g. \cite{HM1} Lemma 8.1. 
\begin{lemm}\label{lemm:span}
The vector space $H_T^{2k}(\Delta)_\Q$ is spanned by elements of 
the form
\[ u_1\cdots u_{k_1}x_{J_{k_1}},\ J_{k_1}\in 
 \Sigma^{(k-k_{1})}, \ u_i\in M_\Q, \]
with $0\leq k_1\leq k-1$. 
\end{lemm}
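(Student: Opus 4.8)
The plan is to reduce the statement to a claim about the monomial spanning set of the face ring and then to trade excess powers of the $x_i$ for linear elements of $M_\Q$ by a double induction. As a $\Q$-vector space $H_T^{2k}(\del)_\Q$ is spanned by the monomials $m=\prod_{i}x_i^{\alpha_i}$ of total degree $\sum_i\alpha_i=k$ whose support $J=\{i\mid\alpha_i>0\}$ is a simplex of $\sigm$, since the monomials supported on non-faces vanish. To each such monomial I attach its \emph{excess} $e=k-|J|=\sum_{i\in J}(\alpha_i-1)\ge 0$, and I propose to prove the assertion by induction on the pair $(k,e)$ ordered lexicographically.

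The key computation is the following. Fix a monomial $m=\prod_{i\in J}x_i^{\alpha_i}$ of excess $e\ge1$, so that some $\alpha_j\ge2$, and choose a maximal simplex $I\in\sigmn$ with $J\subset I$. Since $\{v_i\}_{i\in I}$ is a basis of $N_\Q$ with dual basis $\{\uiI\}_{i\in I}$, we have $\l\ujI,v_i\r=\delta_{ji}$ for $i\in I$, so under the homomorphism \eqref{eq:module} the linear element $\ujI\in M_\Q$ is sent to
\[ \ujI=x_j+\sum_{i\notin I}\l\ujI,v_i\r x_i \quad\text{in }H_T^2(\del,\V), \]
whence $x_j=\ujI-\sum_{i\notin I}\l\ujI,v_i\r x_i$. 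Writing $m=m'x_j$ with $m'=x_j^{\alpha_j-1}\prod_{i\in J,\,i\ne j}x_i^{\alpha_i}$ of degree $k-1$, this substitution yields
\[ m=m'\,\ujI-\sum_{i\notin I}\l\ujI,v_i\r m'x_i. \]

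First I would dispose of the term $m'\ujI$: here $m'$ has degree $k-1$, so by the inductive hypothesis in degree $k-1$ it is a $\Q$-combination of products $u_1\cdots u_{k_1'}x_{J'}$ with $J'\in\sigm^{(k-1-k_1')}$, and multiplying by the single linear factor $\ujI$ produces terms of exactly the required shape, now with $k_1=k_1'+1$ linear factors. The remaining terms $m'x_i$ all have degree $k$, and this is where the main point lies: because $i\notin I\supset J$, each such index $i$ lies strictly outside the support $J$ of $m'$, so whenever $m'x_i\ne0$ its support is $J\cup\{i\}$, a simplex of size $|J|+1$, and hence its excess equals $e-1$. Thus every surviving correction term has the same degree $k$ but strictly smaller excess, and the inductive hypothesis for $(k,e-1)$ applies. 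The base cases are immediate: $e=0$ forces all $\alpha_i=1$, so $m=x_J$ with $J\in\sigm^{(k)}$, already of the desired form with $k_1=0$; and since $\alpha_j\ge2$ forces $k\ge2$, the factor $m'$ always has positive degree, so the degree-$0$ case is never invoked.

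The only genuine subtlety is the well-foundedness of the recursion, which is secured by the lexicographic order on $(k,e)$: the term $m'\ujI$ strictly decreases $k$, while the correction terms keep $k$ fixed and strictly decrease $e$. The fact that the correction indices $i$ lie outside $I$, and therefore outside $J$, is precisely what guarantees that the excess drops rather than merely redistributing among the original variables; this is the one place where the choice of a \emph{maximal} cone $I\supset J$ and the duality $\l\uiI,v_{i'}\r=\delta_{ii'}$ on $I$ are used.
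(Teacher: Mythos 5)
Your argument is correct, and it is essentially the standard one: the paper itself gives no proof of this lemma, deferring to \cite{HM1} Lemma 8.1, where the same device appears --- substituting $x_j=u_j^I-\sum_{i\notin I}\l u_j^I,v_i\r x_i$ for a maximal $I\supset J$ and inducting so that the correction terms have strictly larger square-free support. Your bookkeeping via the lexicographic order on (degree, excess), and the observation that $\alpha_j\ge 2$ forces $k\ge 2$ so the degree-$(k-1)$ hypothesis is always available, are both sound.
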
 

\begin{note} 
For $x=u_1\cdots u_{k_1}x_{J_{k_1}},\ J_{k_1}\in 
\Sigma^{(k-k_{1})}$, with $k_1\geq 1$, 
\[ p_*(e^\xi x)=0. \]
\end{note}
In view of this lemma we proceed by induction on $k_1$ for 
$x=u_1\cdots u_{k_1}x_{J_{k_1}}$. 
 
For $x=x_{J_0}$ with $J_0\in \sigmk$, the left hand side of 
\eqref{eq:1prime} 
is equal to $p_*(e^\xi x_{J_0})$. Since $i_J^*(x)=0$ unless 
$J=J_0$ and $i_J^*(x)/\prod_{j\in J}u_j^J=1$ for $J=J_0$, the 
right hand side 
is also equal to $p_*(e^\xi x_{J_0})$. Hence \eqref{eq:1prime} 
holds with $x$ of the form 
$ x=x_{J_0} \ \text{for $J_0\in \sigmk$}$. 

Assuming that \eqref{eq:1prime} holds for $x$ of the form 
$u_1\cdots u_{k_1}x_{J_{k_1}}$ with 
$J_{k_1}\in\sigm^{(k-k_1)}$, we shall prove that 
it also holds for $x=u_1\cdots u_{k_1}u_{k_1+1}x_{J_{k_1+1}}$
with $J_{k_1+1}\in \sigm^{(k-(k_1+1))}$. Put $K=J_{k_1+1}$. 
\vspace*{0.2cm}\\
Case a). $u_{k_1+1}$ belongs to $M_{K\Q}$, 
that is, $\l u_{k_1+1},v_i\r=0$ for all $i\in K$. In this 
case 
\[ u_{k_1+1}=\sum_{i\in\sigmone\setminus K}\l u_{k_1+1},v_i\r x_i \]
since $\l u_{k_1+1},v_i\r=0$ for all $i\in K$. For $i\not\in K$, 
$x_ix_{J_{k_1+1}}$ is either of the form $x_{J^i}$ 
with $J^i\in\sigm^{(k-k_1)}$ or equal to $0$. Thus, for 
$x=u_1\cdots u_{k_1}x_ix_{J_{k_1+1}}$ with $i\not\in K$, 
the equality \eqref{eq:1prime} 
holds by induction assumption, and it also holds for 
$x=u_1\cdots u_{k_1}u_{k_1+1}x_{J_{k_1+1}}$ by linearity. 
\vspace*{0.2cm}\\
Case b). General case.
We need the following
\begin{lemm}\label{lemm:MKQ} 
For $K\in \sigm^{(k-k_1)}$ with $k_1\geq 1$, the composition homomorphism 
$M_{K\Q}\subset M_\Q\to E_\Q^*$ is surjective.
\end{lemm}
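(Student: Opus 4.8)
The plan is to reduce the surjectivity assertion to the single transversality statement $(N_K\otimes\Q)\cap E_\Q=0$, and then to read this off from a dimension count in which the hypothesis $k_1\geq 1$ is exactly what is needed. For the reduction I would first pin down the image of the composition $M_{K\Q}\subset M_\Q\to E_\Q^*$. Writing $W=N_K\otimes\Q\subset N_\Q$, so that $M_{K\Q}=\{u\in M_\Q\mid u|_W=0\}$ and the map is restriction of functionals to $E_\Q$, I claim its image is precisely the annihilator in $E_\Q^*$ of $W\cap E_\Q$. One inclusion is immediate, since a functional that kills all of $W$ kills $W\cap E_\Q$ in particular. For the reverse inclusion, take $\psi\in E_\Q^*$ vanishing on $W\cap E_\Q$; the prescription that is $0$ on $W$ and $\psi$ on $E_\Q$ is well defined on $W+E_\Q$, because the two recipes agree on $W\cap E_\Q$ exactly when $\psi$ annihilates that intersection, and over the field $\Q$ it extends to some $u\in M_\Q$, which then lies in $M_{K\Q}$ and restricts to $\psi$. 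Hence the composition is surjective if and only if $W\cap E_\Q=0$.

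It remains to verify $W\cap E_\Q=0$ for the generic $E$ at hand. Since $K\in\sigm^{(k-k_1)}$, the cone $C(K)$ is simplicial of dimension $k-k_1$, so $\dim_\Q W=k-k_1$, while $\dim_\Q E_\Q=n-k+1$; therefore
\[
 \dim_\Q W+\dim_\Q E_\Q=(k-k_1)+(n-k+1)=n+1-k_1\leq n,
\]
the inequality using $k_1\geq 1$. For a fixed $W$ with $\dim W+\dim E_\Q\leq n$, the set of $(n-k+1)$-planes meeting $W$ nontrivially is a proper Schubert subvariety of $G_{n-k+1}(N_\Q)$, of codimension $k_1$. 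As Theorem \ref{theo:multimain} is an equality of rational functions on the Grassmannian, it suffices to establish it on a dense open set, and I would arrange the standing genericity of $E$ to include the finitely many open conditions $(N_K\otimes\Q)\cap E_\Q=0$ attached to the simplices $K$ occurring in the induction. For such $E$ the intersection is trivial and the lemma follows.

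The point requiring the most care is not the linear algebra but making sure that ``generic'' can legitimately absorb all these transversality conditions at once: across the whole induction one needs $(N_K\otimes\Q)\cap E_\Q=0$ for every $K\in\sigm^{(j)}$ with $j<k$ that appears, and since each such condition is open and dense while there are only finitely many $K$, they hold simultaneously on a dense open subset of $G_{n-k+1}(N_\Q)$. Everything else rests on the elementary fact that $k_1\geq 1$ forces $\dim W+\dim E_\Q\leq n$; this is precisely the numerical input that pushes the expected intersection dimension down from $1$, the value relevant when $k_1=0$ and $E\cap N_J$ has rank one, to $0$.
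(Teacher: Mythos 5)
Your proof is correct, but it takes a genuinely different route from the paper's. You reduce the statement to the single transversality condition $N_{K}\otimes\Q\cap E_\Q=0$ (via the correct observation that the image of $M_{K\Q}\to E_\Q^*$ is the annihilator of $N_{K}\otimes\Q\cap E_\Q$), and then obtain that condition from the dimension count $\dim(N_K\otimes\Q)+\dim E_\Q=n+1-k_1\leq n$ together with an enlargement of the standing genericity hypothesis on $E$; since the conclusion of Theorem \ref{theo:multimain} is an identity of rational functions on $G_{n-k+1}(N_\Q)$, checking it on the dense open set where these finitely many Schubert conditions are avoided is indeed legitimate. The paper argues instead combinatorially: it chooses $K'\in\sigm^{(k-1)}$ with $K\subset K'\subset I$ for some $I\in\sigmn$ (possible precisely because $k-k_1\leq k-1$, i.e.\ $k_1\geq 1$), observes that the $n-k+1$ simplices $J^i$ with $K'\subset J^i\subset I$ yield vectors $v_{E,J^1},\dots,v_{E,J^{n-k+1}}$ spanning $E_\Q$, and that the dual basis elements $u_j^I$ for $j\in I\setminus K'$, which lie in $M_{K'\Q}\subset M_{K\Q}$, pair with these vectors in an essentially diagonal and nondegenerate way, giving surjectivity of $M_{K'\Q}\to E_\Q^*$ directly. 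The paper's route stays close to the objects $v_{E,J}$ already central to the theorem and only uses genericity of the pairings $\l u_j^I,v_{E,J}\r$, at the cost of invoking the combinatorial structure of $\sigm$ (every simplex sits in a top-dimensional one); your route is purely linear-algebraic, isolates exactly which open condition on $E$ is needed and why $k_1\geq 1$ is the decisive numerical input, at the cost of explicitly widening what ``generic'' must mean. Both are valid, and your final paragraph correctly anticipates and resolves the one point where your argument could be questioned.
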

The proof will be given later. By this lemma, there exists an 
element $u\in M_{K\Q}$ such that
\[ \l u_{k_1+1},v_{E,J}\r=\l u,v_{E,J}\r \ \text{for all $J\in \sigmk$}. \]
Note that $\l \iota_J^*(u),v_{E,J}\r=\l u,v_{E,J}\r$ for any 
$u\in M_\Q$. 
Then, in \eqref{eq:1prime} for $x=u_1\cdots u_{k_1}u_{k_1+1}x_{J_{k_1+1}}$
with $J_{k_1+1}\in \sigm^{(k-(k_1+1))}$, we have 
\[ \begin{split}\iota_J^*(x)(v_{E,J})
  &=(\prod_{i=1}^{k_1}\l u_i,v_{E,J}\r)\l u_{k_1+1},v_{E,J}\r \\
  &=(\prod_{i=1}^{k_1}\l u_i,v_{E,J}\r)\l u,v_{E,J}\r.
  \end{split} \]
Hence if we put $x'=u_1\cdots u_{k_1}ux_{J_{k_1+1}}$, the right 
hand side of \eqref{eq:1prime} is equal to 
\[ \sum_{J\in \sigmk} \frac{\iota_J^*(x')}
 {\prod_{j\in J}u_j^J}(v_{E,J})p_*(e^\xi x_J). \]
This last expression is equal to $p_*(e^\xi x')$ since 
$x'$ belongs to Case a). Furthermore 
$p_*(e^\xi x')=0$ and $p_*(e^\xi x)=0$ by Note after 
Lemma \ref{lemm:span}. Thus both side of 
\eqref{eq:1prime} for $x=u_1\cdots u_{k_1}u_{k_1+1}x_{J_{k_1+1}}$ 
are equal to $0$. This completes the proof of Theorem 
except for the proof of Lemma \ref{lemm:MKQ}. 
\vspace*{0.2cm}\\
\noindent Proof of Lemma \ref{lemm:MKQ}. 

Take a simplex $I\in \sigmn$ which contains $K$ and a simplex 
$K'\in \sigm^{(k-1)}$ such that $K\subset K'\subset I$. 
Such a $K'$ exists since $k-k_1\leq k-1$. Then there are 
exactly $n-k+1$ simplices $J^1,\ldots,J^{n-k+1}\in\sigmk$ such that 
$K'\subset J^i\subset I$. It is easy to see that the 
vectors $v_{E,J^1},\ldots,v_{E,J^{n-k+1}}$ are linearly independent
so that they span $E_\Q$. Moreover $M_{K'\Q}$ detects these 
vectors, that is, $M_{K'\Q}\to M_\Q\to E^*_\Q$ is surjective. Since 
$M_K'\subset M_K\subset M$, $M_{K\Q}\to E^*_\Q$ is surjective. 
\vspace*{0.2cm}\\
\noindent Proof of Corollay \ref{coro:multiak}

By Proposition \ref{prop:ToddDH2}
\[
 \#(\PO(\nu\xi))=p_*(e^{\nu\xi}\T_T(\del,\V))=\sum_{k=0}^n a_k(\xi)\nu^{n-k}. 
\]
Put $x=\left(\T_T(\del,\V)\right)_k\in H_T^{2k}(\del,\V)_\Q$. 
By Theorem \ref{theo:multimain} and Proposition \ref{prop:xivol}

\[
 a_k(\xi)=\sum_{J\in\sigmk}\nu^*\left(\frac{f^J(x)}{f^J(x_J)}\right)
\frac{\vol\POxi_J}{|H_{J,\V}|}.
 \] 
Thus it suffices to show that
\[ \frac{f^J(x)}{f^J(x_J)}=\left(\sum_{h\in H_{J,\V}}
\prod_{j\in J}\frac1{1-\chi(\ujJ,h)e^{-f^J(x_j)}}\right)_0, \]
or 
\[ f^J(x)=\left(\sum_{h\in H_{J,\V}}
\prod_{j\in J}\frac{f^J(x_j)}{1-\chi(\ujJ,h)e^{-f^J(x_j)}}\right)_k. \]

Let $g\in G_\del$. If $g\not\in G_J$, then there is an element  
$i\not\in J$ such that $\chi_i(g)\not=1$, and, for such $i$, 
\[ f^J(\frac{x_i}{1-\chi_i(g)e^{-x_i}})=
f^J((1-\chi_i(g))^{-1}x_i+ \text{higher degree terms})=0, \]
since $f^J(x_i)=0$. Thus 
\[ 
f^J\left(\prod_{i\in\sigmone}\frac{x_i}{1-\chi_i(g)e^{-x_i}}\right)=0
\]
for $g\not\in G_J$. 

If $g\in G_J$, then $\chi_i(g)=1$ for $i\not\in J$. Thus 
\[
f^J\left(\frac{x_i}{1-\chi_i(g)e^{-x_i}}\right)
=f^J(1+\frac12x_i+\text{higher degree terms})=1
\]
for $g\in G_J,\ i\not\in J$. It follows that 
\[
f^J\left(\sum_{g\in G_\del}\prod_{i\in\sigmone}
\frac{x_i}{1-\chi_i(g)e^{-x_i}}\right)=
\sum_{g\in G_J}\prod_{i\in J}
\frac{f^J(x_i)}{1-\chi_i(g)e^{-f^J(x_i)}}.
\]
This implies 
\[ f^J(\T_T(\del,\V)_k)=\left(\sum_{h\in H_{J,\V}}\prod_{j\in J}
\frac{f^J(x_j)}{1-\chi_J(u_j^J,h)e^{-f^J(x_j)}}\right)_k.
\]
This finishes the proof of Corollary \ref{coro:multiak}.
\vspace*{0.3cm}\\
\noindent Proof of Corollary \ref{coro:multimain}. 

Put $x'=\sum_{J\in \sigmk}\mu(x,J)x_J$. Then
\[ p_*(e^\xi x')
  =\sum_{J\in \sigmk}\mu(x,J)p_*(e^\xi x_J) 
  =p_*(e^\xi x)\] 
by Theorem \ref{theo:multimain}. It follows that $p_*(e^\xi(x'-x))=0$. 
Thus, in order to prove Corollary \ref{coro:multimain}, it suffices to show 
that $p_*(e^\xi y)=0,\ \forall \xi\in H_T^2(\del)_\Q$, implies 
that $p^*(y)=0$ . 
By the assumption $\del$ is isomorphic to $\del_X$ where $X$ is a 
torus orbifold such that $H^*(X)_\Q$ is generated by $H^2(X)_\Q$. 
For such $X$ we know that $H_T^*(\del)_\Q=H_T^*(X)_\Q$ and 
$H^*(\del)_\Q=H^*(X)_\Q$ by Remark \ref{rem:duality}.   
In particular $H^*(\del)_\Q$ satisfies the Poincar\'{e} duality. 
It follows that $p_*(e^\xi y)=0$ 
for all $\xi$ implies that $p_*(y)=0$.

\providecommand{\bysame}{\leavevmode\hbox to3em{\hrulefill}\thinspace}

\end{document}